\documentclass[11pt,a4paper,openany,oneside]{article}
\usepackage[a4paper,left=2.5cm,right=2.5cm, top=3cm, bottom=3cm]{geometry}
\usepackage{times}
\usepackage{authblk}
\usepackage[latin1]{inputenc}
\usepackage[italian,english]{babel}
\usepackage{mathrsfs}
\usepackage{stmaryrd}
\usepackage{amssymb,amsmath,amsthm}
\usepackage{hyperref}
\usepackage{graphicx}
\usepackage{eurosym}
\usepackage{color}
\usepackage{enumerate}
\usepackage{multirow} 
\usepackage{mathtools}
\usepackage{scalerel}
\usepackage{booktabs}
\usepackage{float}
\usepackage{amsfonts}
\usepackage{cases}
\usepackage{caption}
\usepackage{todonotes}
\usepackage{relsize}

\theoremstyle{plain}
\newtheorem{thm}{Theorem}[section]

\newtheorem{lem}[thm]{Lemma}

\newtheorem{prop}[thm]{Proposition}

\newtheorem*{examples}{Examples}
\theoremstyle{definition}

\theoremstyle{remark}
\newtheorem{remark}{Remark}

\usepackage{tikz}
\usepackage{pgfplots}
\pgfplotsset{compat=1.18}
\usetikzlibrary{patterns,calc,intersections,decorations.pathreplacing,pgfplots.fillbetween}

\definecolor{myred}{rgb}{0.84,0.07,0.14}

\newcommand{\R}{\mathbb R}

\newcommand{\Omegabar}{\overline\Omega}
\newcommand{\dOmega}{{\partial\Omega}}
\newcommand{\dB}{{\partial B}}
\newcommand{\intOmega}{\int_\Omega}

\newcommand{\etal}{\e^{t^\alpha}}
\newcommand{\etb}{\e^{t^\beta}}
\newcommand{\thetabar}{\overline\theta}

\usepackage[normalem]{ulem}

\def\e{{\rm e}}

 \def\dd{\, {\rm d}}

\newcommand{\tu}{\widetilde u}
\newcommand{\tv}{\widetilde v}
\newcommand{\tw}{\widetilde w}

\newcommand{\bv}{\overline v}

\newcommand{\ges}{\gtrsim}
\newcommand{\loglog}{\log\log}

\DeclareOldFontCommand{\it}{\normalfont\itshape}{\mathit}

\newcommand{\supp}{\text{\rm supp}}

\newcommand{\slfrac}[2]{\raisebox{2pt}{$#1$} \hspace{-2pt} \raisebox{1pt}{\large$/$} \hspace{-2pt} \raisebox{-2pt}{$#2$}}
\newcommand{\tslfrac}[2]{\raisebox{1pt}{$\scriptstyle #1$}\hspace{-1pt}\raisebox{0.5pt}{\large$\scriptstyle/$}\hspace{-1pt}\raisebox{-1pt}{$\scriptstyle #2$}}

\numberwithin{equation}{section}
\begin{document}
	\title{{\Large A blow-up approach for a priori bounds in semilinear planar elliptic systems:\\ the Brezis-Merle critical case}}
	\author{Laura Baldelli\thanks{Institute for Analysis, Karlsruhe Institute of Technology (KIT) D-76128 Karlsruhe, Germany. \texttt{laura.baldelli@kit.edu}}, \ 
    Gabriele Mancini\thanks{Dipartimento di Matematica, Universita degli Studi di Bari Aldo Moro, Via Orabona 4, 70125 Bari, Italy. \texttt{gabriele.mancini@uniba.it}}, \ 
    Giulio Romani\thanks{Dipartimento di Scienze Matematiche, Informatiche e Fisiche, Universit\`{a} degli Studi di Udine, Via delle Scienze 206, 33100 Udine, Italy. \texttt{giulio.romani@uniud.it}}}
	\date{\today}
\maketitle

\begin{abstract}
	We establish uniform a priori estimates for solutions of semilinear planar Hamiltonian elliptic systems in a ball with Dirichlet boundary conditions. We consider a broad class of coupled nonlinearities with asymptotic critical behaviour in the sense of Brezis--Merle. The approach we follow is based on a blow-up analysis combined with Liouville--type theorems and integral estimates. Our results extend the scalar theory of uniform a priori bounds to the Hamiltonian case, and solve an open problem  in \cite{dFdOR}. We believe that this approach is new in this setting. As a consequence of our a priori estimates, we prove the existence of a positive solution by means of Fixed Point Index theory.
\end{abstract}

	\section{Introduction}
    Let $\Omega\subset\R^N$ be a smooth bounded domain and consider the semilinear Hamiltonian elliptic system with Dirichlet boundary conditions
    \begin{equation}\label{sys}
		\begin{cases}
			-\Delta u=f(v)\quad&\mbox{in }\Omega\,,\\
			-\Delta v=g(u)\quad&\mbox{in }\Omega\,,\\
			u=v=0\quad&\mbox{on }\dOmega\,.
		\end{cases}
	\end{equation}
    It is well known that for $N\geq3$ polynomial growth conditions at infinity have to be fulfilled in order to set these variational problems in Sobolev spaces and prove existence of solutions. If $N\geq3$, for systems of the kind \eqref{sys}, the good notion of criticality is represented by a critical hyperbola: in the model case $f(v)=|v|^{p-1}v$ and $g(u)=|u|^{q-1}u$ with $p,q>1$, then the critical hyperbola defined as
    \begin{equation}\label{hyp}
        \frac1{p+1}+\frac1{q+1}=\frac{N-2}N
    \end{equation}
    divides existence and nonexistence of classical positive solutions, see \cite{HV,HMV,M}. Note that if \eqref{sys} is set in the entire $\R^N$ space, then it is proved that the Sobolev hyperbola \eqref{hyp} plays the same role for $N=3,4$, while it is still an open problem, under the name of Lane-Emden conjecture, for dimensions $N\ge 5$, see \cite{M96,PQS,S}. In contrast, much faster growths are admissible in dimension two: in particular, exponential nonlinearities can be treated, with the Trudinger-Moser inequality taking the role of the Sobolev embedding theorem valid for $N\geq3$. For the system \eqref{sys} we still can find a notion of critical hyperbola in the setting of Sobolev-Lorentz spaces: to give an idea, if $f(t)\sim\e^{|t|^\alpha}$ and $g(t)\sim\e^{|t|^\beta}$ with $0<\alpha,\beta<+\infty$ (here the symbol $\sim$ means having the same asymptotic growth), then the maximal growth is obtained on the ``conformal'' critical hyperbola
    \begin{equation}\label{abhyp}
        \frac1\alpha+\frac1\beta=1\,,
    \end{equation}
    see \cite{Ruf_sys} and the more recent advance in \cite{dORR}.

    When one treats equations or systems which are not necessarily variational (e.g. \eqref{sys} when $f$ and $g$ depend on both variables $u$ and $v$), a typical technique to prove the existence of solutions is the topological degree theory. An essential step in the proof is to provide a priori uniform bounds for solutions in $L^\infty$ norm. In fact, from this one infers that a certain fixed point (or Leray-Schauder) index in a large ball of the functional space is zero; moreover, by imposing some natural assumptions on the nonlinearities at $0$, the index in a neighbourhood of the origin is not zero, and therefore, by additivity of the index, one finds a nontrivial solution.    
    
    This strategy has been extensively employed in various contexts, and the study of $L^\infty$ a priori bounds for solutions has become a topic of independent interest in the literature. For the scalar case, we refer to the classical papers by Gidas and Spruck \cite{GS} and de Figueiredo, Lions, and Nussbaum \cite{dFLN} for semilinear problems involving subcritical polynomial nonlinearities in dimension $N\geq3$. Such results have been extended to the higher-order and quasilinear settings, and for wider classes of subcritical nonlinearities, see e.g. \cite{RW,AC,Ruiz,Zou,BF_eq,DP}.	In broad terms, these results indicate that a priori bounds for strong as well as distributional solutions can be established for (almost) all subcritical nonlinearities, up to the threshold of Sobolev critical growth. Concerning the system \eqref{sys} with $N\geq3$, the situation is comparable, in the sense that the critical hyperbola still represents the threshold for both existence and $L^\infty$ a priori bounds, see among others \cite{CdFM,Zou_sys,QS,PQS,BF}.
    
	The situation considerably changes in the case of the \textit{conformal dimension}, where the critical threshold for the existence $t\mapsto\e^{t^{\frac N{N-1}}}$ is given by the Trudinger-Moser inequality. Here, as the seminal work of Brezis-Merle \cite{BM} shows for the scalar second-order case in dimension $N=2$, the limiting growth up to which one may expect to find a priori bounds within the class of \textit{distributional} solutions of $-\Delta u=f(x,u)$ with Dirichlet boundary conditions is only $t\mapsto\e^t$. Within this setting, in \cite{BM} the authors prove a priori bounds for nonlinearities bounded from below and above by $\e^t$, under the assumption $\intOmega f(x,u)\dd x\leq\Lambda$; see also \cite{ChenLi}. We note that such a condition follows from the analysis near the boundary already obtained in \cite{dFLN}. More in general, the respective quasilinear problem involving $-\Delta_N$ in $\Omega\subset\R^N$ with $N\geq2$, was investigated in \cite{LRU}, where the nonlinearities behave either like $\e^{pt}$ for some $p>0$, or grow less than $\e^{t^\alpha}$ with $\alpha\in(0,1)$. The authors use regularity estimates in Orlicz spaces to cover the latter case, while extend the arguments from \cite{BM} in the former. This diversity of the method causes a gap between the two classes of nonlinearities, and e.g. $f(t)=\tfrac{\e^t}{t+1}$ could not be treated with such techniques. This gap was filled in \cite{Rom_N}, where instead a blow-up approach was used. The same technique has been also applied in \cite{MR} to the respective problem of higher-order, namely when the operator is $(-\Delta)^m$ in $\R^{2m}$, $m\geq2$, in both cases of Dirichlet and Navier boundary conditions. In these works, the key assumption which allows to deal at once with both classes of nonlinearities in \cite{LRU}, is the existence of
    \begin{equation}\label{b}
        \lim_{t\to+\infty}\frac{f'}f(t)=b\in[0,+\infty)\,
    \end{equation}
    where the case $b>0$ corresponds to the critical setting in the sense of Brezis and Merle, while $b=0$ for all subcritical growths. We point out that with a blow-up approach one may further derive uniform bounds for \textit{weak solutions} of Dirichlet problems with nonlinearity in the class $f(x,u)=u^{p-1}\e^{u^p}$ even for $p\in(1,2]$, that is, up to the critical Trudinger-Moser growth, under different integral bounds of energy type, see \cite[Theorem 2.2]{DT} and \cite[Theorem 2.1]{MMT}; see also \cite{RobWei,D} for related results.
    	
	When dealing with planar systems, a priori bounds have been investigated in \cite{dFdOR} for systems of the kind \eqref{sys} where $f,g$ are positive nonlinearities (which may also depend on $x$), and then extended also for nonvariational elliptic systems in \cite{dFdOR2}. First, by a moving plane techniques the authors find uniform bounds of the kind $\intOmega f(v)\dd x\leq\Lambda$ and $\intOmega g(u)\dd x\leq\Lambda$. Then, in order to apply either Orlicz spaces techniques or the argument inspired by \cite{BM}, again as in \cite{LRU}, a dichotomy in the assumptions was needed: in fact, they impose that $f$ and $g$ grow either less than $\e^{t^\alpha}$ and $\e^{t^\beta}$, respectively, with $\alpha,\beta>0$ and $\alpha+\beta<2$, or both like $\e^t$. In addition to the already recalled gap between the growths $\e^{t^\alpha}$ and $\e^t$, the limiting case %``
    \begin{equation}\label{alpha+beta=2}
        \alpha+\beta=2\qquad\mbox{with}\qquad\alpha\neq1\neq\beta
    \end{equation}
    is stated as an open problem in \cite[Remark 1.3]{dFdOR}.
	
	The main purpose of this article is to progress with the study of a priori bounds for system \eqref{sys} in the conformal setting $N=2$, by covering this case of limiting nonlinearities (which from now on will be called \textit{critical in the sense of Brezis-Merle}), by performing a blow-up analysis in the spirit of \cite{Rom_N,MR}. For the model nonlinearities $\e^{t^\alpha}$ and $\e^{t^\beta}$, this will correspond to prove such bounds under \eqref{alpha+beta=2}.  In the companion paper \cite{BMR_sub} we treat instead the subcritical case. We note indeed that such a case is not fully covered by \cite[Theorem 1.3]{dFdOR}, since there are nonlinearities for which $b=0$ but however grow more than any function of the kind $\e^{pt^\alpha}$ irrespective of the choices of $p\geq1$ and $\alpha\in(0,1)$. For instance, it is sufficient to consider $g(t)=\e^t$ and $f(t)=\e^{\frac t{\log t}}$ so that $\slfrac{f'}f(t)=\frac{\log t-1}{(\log t)^2}\sim\frac1{\log t}\to0$, but slower than any function of the kind $\e^{pt^\alpha}$. Crucial steps in our methods will be first to identify the right assumption on $f$ and $g$, which detects the ``critical'' nonlinearities and extends \eqref{b} for systems, and next to determine a suitable scaling, that allows to find a limit system in $\R^2$ to which one may apply a Liouville-type argument. During the blow-up analysis, several technical difficulties will arise due to the non-scalar nature of the problem. In fact, in the proof there will be a continuous exchange of information between the two components of the system. As a first contribution, here we restrict to consider the symmetric case of $\Omega$ being the ball $B_1(0)$: indeed, in this setting we can take advantage to the Schwarz symmetry of the solutions $(u,v)$, which implies that both components cannot have blow-up points except for the origin, and this simplifies to some extent our arguments. However, we believe that with a nontrivial effort one can also apply our strategy even in the case of a general bounded domain of $\R^2$, also in view of the $L^1$-bounds already proved in \cite{dFdOR}.
    
    We also mention that blow-up techniques for elliptic systems have been extensively employed in the framework of Toda-type systems,
    where, differently from \eqref{sys}, a strong algebraic structure and integrability properties allow a detailed analysis of bubbling solutions and mass quantization phenomena, see e.g.  \cite{JostWang,LinWeiZhao,LinWeiYangZhang} and references therein. However, the techniques developed in that setting strongly exploit the specific Cartan matrix structure of the  system and therefore do not appear to be directly transferable to the Hamiltonian systems studied here. To the best of our knowledge, we believe that the approach we follow here is new in the literature, and we hope it could serve for other kind of problems.
	\vskip0.2truecm
	Let us now enter into the details of our results.
	
	\subsection{Assumptions and main results}
	
	Having in mind the model growths
	\begin{equation}\label{NL_model}
		f(t)\sim\etal,\quad g(t)\sim\etb\qquad\mbox{with}\ \ \alpha>0,\ \beta>0
	\end{equation}
	with the critical condition
	\begin{equation*}
		\alpha+\beta=2\,,
	\end{equation*}
	we consider the following assumptions:
	\begin{enumerate}
		\item[$(H_1)$] $f,g\in C^1(\overline{\R^+})$, positive in $\R^+$, such that $\displaystyle\lim_{t\to+\infty}f(t)=+\infty$ and $\displaystyle\lim_{t\to+\infty}g(t)=+\infty$;
		\item[$(H_2)$] there exist $t_0>0$ such that $f'(t)>0$ and $g'(t)>0$ for all $t\in(t_0,+\infty)$;
        \item[$(H_3)$] $\displaystyle\left(\frac f{f'}\right)'\!\!(t)\to0$ and $\displaystyle\left(\frac{g'}g\right)'\!\!(t)\to0$ as $t\to+\infty$;
		\item[$(H_b)$] there exists $\displaystyle\lim_{t\to+\infty}\,{\frac{f'}f(t)\,\frac{g'}g(t)}:=b\in(0,+\infty)$.
	\end{enumerate}
	Note that $\slfrac{f'}f$ and $\slfrac{g'}g$ are the derivatives of the exponents of $f=\e^{\log f}$ and $g=\e^{\log g}$, respectively. The case $b=0$ corresponds for the model nonlinearities in \eqref{NL_model} to the case $\alpha+\beta<2$, and therefore will be denoted as \textit{Brezis-Merle subcritical}. On the other hand, $\alpha+\beta=2$ implies $b=\alpha\beta>0$, and we will call it \textit{Brezis-Merle critical case}. In this paper we will only deal with the latter case, postponing the analysis of the subcritical one in the companion paper \cite{BMR_sub}, since substantial differences in the respective blow-up arguments will occur.
    
    In this critical setting, we will also need to distinguish between the case for which the limits of both ratios in $(H_b)$ exist and are finite, from the remaining critical cases. We call the former scenario \textit{Liouville}, which then occurs when there exist $p,q>0$ such that
	\begin{equation}\label{BM}
		\lim_{t\to+\infty}\frac{f'}f(t)=p\quad\mbox{and}\quad\lim_{t\to+\infty}\frac{g'}g(t)=q\,.
	\end{equation}
	In the latter case, which we call \textit{non-Liouville}, we may instead suppose
	\begin{equation}\label{nonBM}
		\lim_{t\to+\infty}\frac{f'}f(t)=0\quad\mbox{and}\quad\lim_{t\to+\infty}\frac{g'}g(t)=+\infty
	\end{equation}
	without loss of generality, since the roles of $f$ and $g$ in \eqref{sys} are symmetric. In this last context, we need some monotonicities and growth conditions on $f$ and $g$, %which also guarantee the existence of the two limits in \eqref{nonBM}. 
	namely we assume that there exists $t_0>0$ such that
	\begin{enumerate}
        \item[$(H_4)$] $\displaystyle\left(\frac f{f'}\right)'\!\!(t)\geq 0$ and $\displaystyle\left(\frac{g'}g\right)'\!\!(t)\geq 0$ as $t>t_0$;
        \item[$(H_5)$] the map $\displaystyle t\mapsto t\,\frac{f'}f(t)$ is increasing and $\displaystyle t\mapsto\frac1t\,\frac{g'}g(t)$ is decreasing in $(t_0,+\infty)$.
    \end{enumerate}
	Note that the existence of the two limits in \eqref{nonBM} is a consequence of assumptions $(H_b)$ and $(H_4)$. Finally, we suppose one of the following conditions on the growth of $f$: there exists $a>1$ such that 
	\begin{enumerate}
		\item[$(H_6)$] $\displaystyle t\,\frac{f'}f(t)\geq a(\log t)^2$ for all $t>t_0$;
		\item[$(H_6')$] $\displaystyle t\,\frac{f'}f(t)\geq a(\log\log F(t))^2$ for all $t>t_0$, where $\displaystyle F(t):=\int_0^tf(\tau)\dd\tau$.
	\end{enumerate}
    
    \vskip0.2truecm
    Before presenting the main result of the paper, let us briefly discuss the assumptions introduced above. First, observe that, except $(H_1)$, are all assumptions on the behaviour at $\infty$. While $(H_1)$ and $(H_2)$ are standard conditions, $(H_3)$ ensures subcriticality in the sense of Trudinger-Moser and that the growth of $f$ and $g$ are superpolynomial (see Lemma \ref{TM-subcritiche} below). The key assumption $(H_b)$, which characterises the problem as critical, can be regarded as the vectorial counterpart of \eqref{b}, see $(A3)$ in \cite{Rom_N} or \cite{MR}, where the scalar case is considered. In the non-Liouville setting, i.e., under condition \eqref{nonBM}, assumptions $(H_4)$ and $(H_5)$ provide suitable monotonicity properties, which are instrumental for our arguments. Note that the existence of the two limits in \eqref{nonBM} is a consequence of the assumptions $(H_b)$ and $(H_4)$. Finally, $(H_6)$ and $(H_6')$ are technical assumptions which imply a mild lower bound on the growth of $f$, which has to be of the kind $\e^{\log t(\loglog t)^p}$ (see Lemma \ref{lem_f8_conseq} below). Note that, although $(H_6)$ is of easier verification than $(H_6')$, the latter permits to include a larger class of nonlinearities. A more detailed analysis of the consequences of our assumptions can be found in Section \ref{Sec_Prel}.

    \vskip0.2truecm

	Since in the blow-up analysis we will need the radial symmetry of the solutions, as already mentioned in the Introduction, we will only deal with the case of $\Omega$ be a ball in $\R^2$.
    Our main result is the following:
	\begin{thm}\label{Thm_Uniform_bound}
		Let $\Omega=B_1(0)$ be the unit ball in $\R^2$, assume conditions $(H_1)$, $(H_2)$, $(H_3)$, $(H_b)$, and alternatively
		\begin{enumerate}
			\item[i)] \eqref{BM} holds;
			\item[ii)] \eqref{nonBM}, $(H_4)$, $(H_5)$, and either $(H_6)$ or $(H_6')$ hold.
		\end{enumerate}
		Then there exists a constant $C>0$ such that
		\begin{equation*}
			\|u\|_\infty\leq C\qquad\mbox{and}\qquad\|v\|_\infty\leq C
		\end{equation*}
		for all (possible) solutions $(u,v)$ of system \eqref{sys}.
	\end{thm}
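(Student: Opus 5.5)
We argue by contradiction via a blow-up analysis. Suppose there is a sequence of solutions $(u_k,v_k)$ of \eqref{sys} in $B_1(0)$ with $\|u_k\|_\infty+\|v_k\|_\infty\to+\infty$. Since $f,g>0$, both components are positive. By the Gidas--Ni--Nirenberg/Troy moving plane method for cooperative systems they are radially symmetric and decreasing, so $M_k:=u_k(0)=\|u_k\|_\infty$ and $N_k:=v_k(0)=\|v_k\|_\infty$. Both tend to $+\infty$, since each component controls the other through the Green representation.

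The first ingredient is a uniform $L^1$ bound. Following \cite{dFdOR}, the moving plane estimates near the boundary together with testing against the first eigenfunction give
\[
\int_{B_1} f(v_k)\dd x+\int_{B_1} g(u_k)\dd x\le\Lambda .
\]
Radial monotonicity then yields pointwise bounds away from the origin, so the origin is the only blow-up point.

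Next we rescale. Set $\mu_k:=\frac{f'}{f}(N_k)$ and $\nu_k:=\frac{g'}{g}(M_k)$. By $(H_b)$, $\mu_k\nu_k\to b$. Define
\[
\tilde u_k(x)=\nu_k\,\bigl(u_k(r_kx)-M_k\bigr),\qquad \tilde v_k(x)=\mu_k\,\bigl(v_k(r_kx)-N_k\bigr),
\]
with the scale $r_k\to0$ chosen so that
\[
r_k^2\,\nu_k f(N_k)=1 .
\]
Then $-\Delta\tilde u_k=\frac{f(v_k(r_kx))}{f(N_k)}$. Assumption $(H_3)$ gives the Taylor-type expansion $\log f(N_k+s/\mu_k)-\log f(N_k)\to s$ locally uniformly, and similarly for $g$, so this right-hand side tends to $\e^{\tilde v}$. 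In the other equation the coefficient is
\[
r_k^2\,\mu_k\, g(M_k)=\frac{\mu_k\,g(M_k)}{\nu_k f(N_k)} .
\]
We must show this ratio stays bounded and away from $0$. We would obtain this by comparing the two equations at the origin via the Green representation and the $L^1$ bounds, i.e.\ by showing that both components concentrate at the same scale. This is the step where information is constantly exchanged between the components.

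Passing to the limit, using elliptic estimates in the radial variable, gives a radial entire solution of
\[
-\Delta U=\e^{V},\qquad -\Delta V=c\,\e^{U}\quad\text{in }\R^2,
\]
with $U(0)=V(0)=0$, $U,V\le0$, and finite masses (by Fatou and the $L^1$ bound). A Liouville-type classification for this planar system, obtained through a Pohozaev identity, shows that finite-mass radial solutions have total masses satisfying
\[
\int \e^{V}\cdot\int c\,\e^{U}=8\pi\Bigl(\int\e^V+\int c\,\e^U\Bigr),
\]
and in particular each mass is at least $4\pi$ (more precisely, they are large). In case i), with $\mu_k\to p$ and $\nu_k\to q$, these masses correspond to masses of $f(v_k)$ and $g(u_k)$ of size at least $4\pi/q$ and $4\pi/p$, respectively. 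We then contradict this via a neck analysis. Integrating the radial equations from the bubble scale to $r=1$ gives $u_k(0)\approx \frac{m}{2\pi}\log\frac1{r_k}$, while the defining relation of $r_k$ gives $\log\frac1{r_k}\approx\frac{p N_k}{2}$, and symmetrically for $v_k$. Combining this with the Pohozaev mass relation forces $pq\,m_1m_2\le 4\pi^2\cdot(\dots)$, which is incompatible with the masses, in the manner of \cite{BM,Rom_N,MR}.

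In case ii) we have $\mu_k\to0$ and $\nu_k\to\infty$. The limiting masses in the original variables are then $\mu_k$- and $\nu_k$-weighted, so the neck estimates must track $\log f$ and $\log g$ at intermediate radii. Here $(H_4)$–$(H_5)$ provide monotone comparison of the exponents along the radial profile, and $(H_6)$ or $(H_6')$ ensure that the error terms of order $\log N_k$ or $\log\log F(N_k)$ are negligible against $N_k\mu_k$.

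I expect the main obstacle to be this non-Liouville neck/energy estimate, together with the proof that the scale ratio $\mu_k g(M_k)/(\nu_k f(N_k))$ is bounded above and below. For the latter, my first attempt would be to argue by contradiction: if the ratio degenerates, one limit equation decouples into $-\Delta V=0$ with $V\le0$ and $V(0)=0$, which forces $V\equiv0$ and hence $-\Delta U=1$ on $\R^2$, contradicting the finite mass.
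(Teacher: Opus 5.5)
Your outline for case i) is workable and is a genuine alternative to the paper's endgame. When $\frac{f'}{f}\to p$ and $\frac{g'}{g}\to q$, the rescaled masses are bounded by $q\Lambda$ and $p\Lambda$, so your decoupling argument does exclude a degenerate scale ratio. In the nondegenerate case, the Green lower bound $u_k(0)\ge\frac{1}{2\pi}\log\frac{1}{Rr_k}\int_{B_{Rr_k}}f(v_k)$, its analogue for $v_k(0)$, and $\log\frac{1}{r_k}\approx\frac{p}{2}N_k\approx\frac{q}{2}M_k$ together contradict the Pohozaev relation. The paper instead classifies the limit via Chanillo--Kiessling and shows that the weak limit of $v_k$ is $\ge\frac4p\log\frac1{|x|}$, which contradicts $\int f(v_k)\le\Lambda$. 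One correction: the Pohozaev identity for $-\Delta U=\e^V$, $-\Delta V=\e^U$ is $\sigma_1\sigma_2=4\pi(\sigma_1+\sigma_2)$, not the version with $8\pi$. The genuine gap is case ii), which is the heart of the theorem.

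In case ii), the claim $\mu_k\nu_k\to b$ is false. $(H_b)$ controls $\frac{f'}{f}(t)\frac{g'}{g}(t)$ at a common argument, while $N_k$ and $M_k$ are a priori unrelated: for $f=\e^{t^\alpha}$, $g=\e^{t^\beta}$ one gets $\mu_k\nu_k=\alpha\beta(M_k/N_k)^{1-\alpha}$. This is exactly why the paper ties the two levels together through the compatibility condition $\frac{f'}{f}(S_k)g(S_k)=\frac{g'}{g}(T_k)f(T_k)$.

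More seriously, set $\rho_k:=\frac{\mu_k g(M_k)}{\nu_k f(N_k)}$. The $L^1$ bounds only give $\int_{B_R}\e^{\tilde v_k}\lesssim\nu_k\Lambda\to\infty$ and $\int_{B_R}\rho_k\frac{g(u_k(r_kx))}{g(M_k)}\dd x\le\mu_k\Lambda\to0$. So there is no finite-mass Liouville limit on which to run a neck analysis. In fact $-\Delta\tilde u_k\le1$ and $(H_4)$ give $\rho_k\le C\mu_k\to0$ outright. The only regime left to exclude is therefore $\rho_k\to0$, and that is precisely where your decoupling argument fails: $V\equiv0$, $-\Delta U=1$ contradicts nothing when the $\e^{V}$-mass is only bounded by $\nu_k\Lambda$.

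This regime corresponds to the paper's hard case $\tilde v_k(0)\to+\infty$ (Subcases 1 and 2). The paper handles it with global radial estimates using $(H_4)$, $(H_5)$ and $(H_6)$ or $(H_6')$. Your sentence about neck estimates tracking $\log f$ and $\log g$ does not substitute for this.

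The Green comparison you allude to can close this regime, but it has to be carried out:
\begin{itemize}
\item If $\rho_k\to0$, then $|\tilde v_k|\le\rho_k|x|^2/4$. By $(H_3)$ and $N_k\mu_k\to\infty$ this gives $f(v_k)=f(N_k)(1+o(1))$ on $B_{Rr_k}$.
\item The Green bound $u_k(0)\ge\frac{1}{2\pi}\log\frac{1}{Rr_k}\int_{B_{Rr_k}}f(v_k)$ then yields $M_k\nu_k\ge\frac{R^2}{4}\log f(N_k)(1+o(1))-C_R$.
\item Combine this with $g(M_k)\le C\nu_kf(N_k)$ and with $\log g(M)\ge(\frac12-o(1))M\frac{g'}{g}(M)$, which follows from $(H_5)$. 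The result is $M_k\nu_k\ge\frac{R^2}{8}M_k\nu_k(1-o(1))-C_R$, which is absurd for $R>\sqrt8$.
\end{itemize}
As written, though, the decisive step of case ii) is missing, and the one concrete argument you offer for it is invalid.
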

	
	Note that, under such growth assumptions, one can easily show that $f$ grows less than or comparable to $\e^{pt}$ for some $p>0$ (see Lemma \ref{growth_below_g} below). Hence we may apply the regularity result \cite[Theorem 1.1]{dFdOR} and conclude that distributional solutions of \eqref{sys} are indeed classical. Therefore, from now on we will simply talk about \textit{solutions of \eqref{sys}}.

    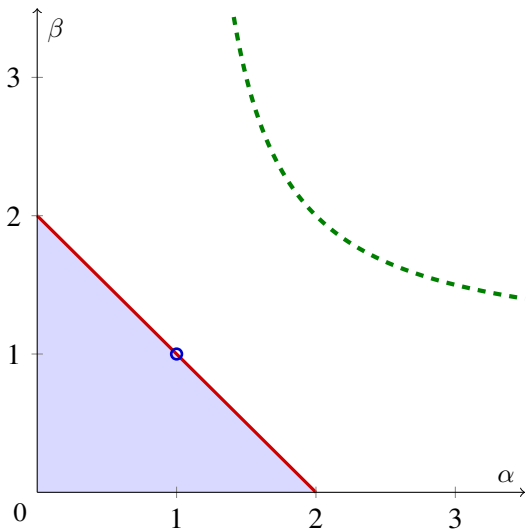
\begin{figure}[h!]
    \centering
    \begin{tikzpicture}
    % Disegno prima il triangolo (sotto gli assi)
    \begin{axis}[
        axis lines=none,
        xmin=0, xmax=3.5,
        ymin=0, ymax=3.5,
        width=8cm, height=8cm,
        clip=false
    ]
    \addplot [name path=retta, domain=0:2, samples=2, draw=none] {2-x};
    \addplot [name path=assex, domain=0:2, samples=2, draw=none] {0};
    \addplot [fill=blue!15, draw=none] fill between[of=retta and assex, soft clip={domain=0:2}];
    \end{axis}
    
    % Secondo livello: assi, curve, punto
    \begin{axis}[
        axis lines=middle,
        xlabel={$\alpha$},
        ylabel={$\beta$},
        xmin=0, xmax=3.5,
        ymin=0, ymax=3.5,
        xtick={0,1,2,3},
        ytick={0,1,2,3},
        xticklabels={0,1,2,3},
        yticklabels={0,1,2,3},
        axis line style={->},
        width=8cm, height=8cm,
        clip=false,
        label style={font=\small},
        axis on top
    ]
    
    % Retta alpha + beta = 2
    \addplot[red!80!black, very thick, domain=0:2, samples=2] {2 - x};
    
    % Punto (1,1) vuoto al centro con bordo blu scuro
    \addplot[
        only marks,
        mark=o,
        mark size=2pt,
        mark options={draw=blue!80!black, line width=1pt}
    ] coordinates {(1,1)};
    
    % Iperbole 1/alpha + 1/beta = 1 (più spessa)
    \addplot[green!50!black, dashed, ultra thick, samples=200, domain=1.01:3.5, restrict y to domain=0:3.5]
        {1/(1 - 1/x)};
    
    % Etichetta 0 all'origine
    \node[below left] at (axis cs:0,0) {\small 0};
    
    \end{axis}
    \end{tikzpicture}
    \caption{The state of the art on uniform a priori bounds for \eqref{sys} for the model case \eqref{NL_model}.}
    \label{fig1}
    \end{figure}
    
    In Figure \ref{fig1} we can see the state of the art for problem \eqref{sys} for the model nonlinearities in \eqref{NL_model}. In particular, a priori bounds in the blue area and in the point $(1,1)$ were proved in \cite{dFdOR}, the red line is our contribution, while the dashed curve in green is the ``conformal'' critical hyperbola \eqref{abhyp}.
    This picture underlines that our case is largely subcritical in the sense of Trudinger-Moser.  Note that we retrieve the scalar results in \cite{LRU,Rom_N} if we restrict on the diagonal of the graph. Therefore, in light of the counterexamples in \cite{BM,MR,Rom_N}, we expect that the region where now the a priori bounds for \eqref{sys} applies is optimal for \textit{distributional} solutions at least in the presence of singular potentials. However, it is still not clear -- even in the scalar case -- whether a priori bounds for \textit{weak} solutions may be proved up to the critical growth in the sense of Trudinger-Moser.
    
    \vskip0.2truecm 
	Although reminiscent of some ideas developed in \cite{MR,Rom_N} to deal with corresponding scalar problems, the method that we here present is new and completely different from the one in \cite{dFdOR}, from which we take only the a priori $L^1$ bounds in Proposition \ref{Prop_Lambda}. Our blow-up technique is based on a suitable new scaling for system \eqref{sys}, and the main challenge is to precisely understand how to profit from the exchange of information between the two components $u$ and $v$ in order to find contradictions with such $L^1$-bounds. Let us try to briefly sketch the main line of our argument. We start in Section \ref{Sec_scaling} by assuming by contradiction that there exists a sequence of solutions $(u_k,v_k)_k$ of \eqref{sys} such that 
    $$
    \max\{\|u_k\|_\infty,\|v_k\|_\infty\}=+\infty\,.
    $$
    
    First, we exclude the possibility that only one among $(\|u_k\|_\infty)_k$ and $(\|v_k\|_\infty)_k$ is unbounded and, exploiting the symmetric setting, we easily see that the origin is the unique blow-up point for both sequences.
    
    The goal is to reach a contradiction either with the uniform $L^1$-bounds on the nonlinear terms (see Proposition \ref{Prop_Lambda}) or by applying a suitable Liouville-type theorem in the plane. In both cases, detecting a suitable scaling is essential: it produces $(\tu_k,\tv_k)_k$ defined on balls $\Omega_k$ with diverging radii, which formally locally converges to a solution of a Liouville's system on $\R^2$. In order to ensure this, we need that the scalings for both components are coupled by a compatibility condition, which we analyse in detail in Section \ref{sec:compatibility}.
    
    Choosing to base the scaling on $\|u_k\|_\infty=u_k(0)$, so that $\tu_k(0)=0$, our blow-up analysis in Section \ref{Sec_blowup} then proceeds by distinguishing several cases according to the behavior of $\tv_k(0)$ as $k\to\infty$. If $\tv_k(0)\to-\infty$ as $k\to\infty$, then the Harnack inequality can be directly applied to $\tu_k$, which leads to a simple contradiction with the energy bounds. If $\tv_k(0)\to c\in\R$, then in the non-Liouville case \eqref{nonBM} the boundedness of $\tv_k$ on compact sets yields a similar situation. In the Liouville case \eqref{BM}, we infer the contradiction from the analysis of the limit Liouville's system, relying on a characterisation result in \cite{CK}. Finally, the analysis becomes more technical when $\tv_k(0)\to +\infty$ and we need to find finer global estimates on $\tu_k$ and $\tv_k$, especially in the non-Liouville setting. Here the additional assumptions $(H_6)$ and $(H_6')$ will permit to find the desired contradiction.
    
	\vskip0.2truecm
	
	Under a standard superlinear growth condition on the nonlinearities in $0$, in Section \ref{Sec_Existence} we show that once the a priori bound is obtained, then the existence of a positive continuous solution $(u,v)$ of \eqref{sys} follows by means of Fixed Point index theory.
    
    \begin{thm}\label{Thm_Existence}
        Besides the conditions under which Theorem \ref{Thm_Uniform_bound} holds, assume moreover
        \begin{enumerate}
            \item[$(H_9)$] $\displaystyle\limsup_{t\to0^+}\frac{f(t)}t<\lambda_1\ $ and $\ \,\displaystyle\limsup_{t\to0^+}\frac{g(t)}t<\lambda_1\,$,
    	\end{enumerate}
        where $\lambda_1$ is the first eigenvalue of $-\Delta$ in $\Omega$ with Dirichlet boundary conditions. Then \eqref{sys} admits a positive solution.
    \end{thm}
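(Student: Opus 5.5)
The plan is the standard fixed point index argument in the positive cone. Set $X=C_0(\overline\Omega)\times C_0(\overline\Omega)$ with the product cone $P$ of nonnegative pairs. Since $f,g>0$ only in $\R^+$, I first extend $f,g$ to $\R$ (for instance $f(t)=f(0)$ for $t<0$, and similarly for $g$). By the maximum principle every solution of the extended system is then nonnegative, hence a solution of \eqref{sys}. For $t\ge0$ I define the homotopy
\begin{equation*}
T_t(u,v)=\Big((-\Delta)^{-1}\big(f(v)+t\big),\,(-\Delta)^{-1}\big(g(u)+t\big)\Big).
\end{equation*}
For $t=0$ this is the solution operator of \eqref{sys}. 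Each $T_t$ is compact and maps $P$ into $P$, by elliptic regularity and the maximum principle; the exponential growth causes no trouble here because $f,g$ are continuous and bounded on bounded sets.

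\textbf{Large balls.} First, the a priori bound of Theorem \ref{Thm_Uniform_bound} must hold uniformly for the perturbed systems with $f+t$, $g+t$, $t\in[0,t_1]$. The nonlinearities $f+t$ and $g+t$ still satisfy $(H_1)$--$(H_6)$ at infinity, since adding a constant does not change the asymptotics of $f'/f$, $g'/g$ or the other ratios. The constant is uniform in $t$, because the blow-up argument, or the $L^1$ bounds of Proposition \ref{Prop_Lambda}, only uses asymptotic constants, and these can be chosen uniform in $t\in[0,t_1]$. This uniformity is what I expect to need the most care: one has to recheck that the contradiction argument runs along a sequence $t_k$ of parameters, which it should since the limit profiles do not see the additive constant.

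Next I exclude fixed points of $T_t$ for large $t$. Test with the first eigenfunction $\varphi_1>0$. By $(H_3)$ the nonlinearities are superlinear, so $f(s)\ge 2\lambda_1 s-C$ and similarly for $g$. Adding the two tested equations gives
\begin{equation*}
\lambda_1\int(u+v)\varphi_1\ge 2\lambda_1\int(u+v)\varphi_1+(2t-2C)\int\varphi_1 .
\end{equation*}
This is impossible for $t>C$, since the left-hand side is then strictly smaller than the right-hand side. Fix such a $t_1$. The solutions for $t\in[0,t_1]$ are bounded by some $R$, so homotopy invariance and the absence of fixed points at $t=t_1$ give $i(T_0,P\cap B_R,P)=0$.

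\textbf{Small balls.} By $(H_9)$ there are $\mu<\lambda_1$ and $r>0$ with $f(s)\le\mu s$ and $g(s)\le\mu s$ for $0\le s\le r$, which forces $f(0)=g(0)=0$. Suppose $(u,v)=\theta T_0(u,v)$ with $\theta\in[0,1]$ and $\|(u,v)\|_\infty=\rho\le r$. Testing with $\varphi_1$ gives
\begin{equation*}
\lambda_1\int(u+v)\varphi_1\le\mu\int(u+v)\varphi_1 ,
\end{equation*}
so $u=v=0$. Hence $i(T_0,P\cap B_\rho,P)=i(0,P\cap B_\rho,P)=1$.

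\textbf{Conclusion.} By additivity of the index, $T_0$ has a fixed point in $P\cap(B_R\setminus\overline B_\rho)$, that is, a nontrivial nonnegative solution of \eqref{sys}. Since $f,g>0$ on $\R^+$ and the solution is nontrivial, the strong maximum principle shows that both components are positive.
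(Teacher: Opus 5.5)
Your scheme matches the paper's except for the homotopy used on the large ball. The small-ball step (index $1$ via $(H_9)$ and testing with $\varphi_1$), the eigenfunction argument excluding fixed points for a large parameter, and the additivity conclusion are all as in the paper. The paper deforms by shifting the argument, $f(v+\mu)$, $g(u+\mu)$. You instead add a constant, $f(v)+t$, $g(u)+t$, and this is where your justification breaks down.

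You claim that $f+t$, $g+t$ satisfy all hypotheses of Theorem~\ref{Thm_Uniform_bound} because adding a constant does not change asymptotics. That is true for the limit conditions $(H_3)$, $(H_b)$, \eqref{BM}, \eqref{nonBM}, $(H_6)$, $(H_6')$, so case i) is fine. In case ii), however, $(H_4)$ and $(H_5)$ are exact monotonicity conditions on $(t_0,+\infty)$, not asymptotic ones, and adding a constant does not preserve them. Indeed
\[
\left(\frac{f+t}{f'}\right)'=\frac{f+t}{f}\left(\frac f{f'}\right)'-\frac tf ,
\]
so $(H_4)$ for $f+t$ amounts to $(f/f')'\ge t/(f+t)$. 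This fails at every large point where $(f/f')'$ vanishes. That happens, for instance, on a sequence of intervals where $f'/f$ is constant, which is compatible with all the hypotheses of case ii) (take $g'/g:=b\,f/f'$). Similarly, $\frac1s\frac{g'}{g+t}=\frac1s\frac{g'}g\cdot\frac g{g+t}$ is a decreasing function times a strictly increasing one, so the second half of $(H_5)$ can also fail.

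Section~\ref{Sec_blowup} uses these monotonicities as exact inequalities: $\frac{f'}f(\eta_k)\le\frac{f'}f(T_k)$ for $\eta_k\ge T_k$, $\frac{g'}g(\zeta_k)\le\frac{g'}g(S_k)$ for $\zeta_k\le S_k$, and \eqref{ineqq}. So you cannot quote Theorem~\ref{Thm_Uniform_bound} for the perturbed family, and your remark that the argument ``only uses asymptotic constants'' is not accurate. The issue is whether the hypotheses hold at all, not only uniformity in $t$.

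The simplest repair is the paper's translation homotopy:
\begin{enumerate}
\item Translation preserves $(H_4)$ trivially.
\item $(H_5)$ follows by writing $\frac1s\frac f{f'}(s+\mu)=\bigl(1+\frac\mu s\bigr)\frac1{s+\mu}\frac f{f'}(s+\mu)$ as a product of two decreasing positive functions, and likewise $\frac1s\frac{g'}g(s+\mu)=\bigl(1+\frac\mu s\bigr)\frac1{s+\mu}\frac{g'}g(s+\mu)$.
\item $(H_6)$, $(H_6')$ and the constant $\Lambda$ of Proposition~\ref{Prop_Lambda} are uniform for $\mu$ in a bounded interval.
\item Your nonexistence argument for a large parameter goes through verbatim, since $f(v+\mu)\ge 2\lambda_1 v+2\lambda_1\mu-C$.
\end{enumerate}
If you keep $f+t$, you must re-run the blow-up analysis with the monotonicities holding only up to factors $1+O(t/f(T_k))$. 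You would then have to check that these errors stay harmless, since they multiply diverging exponents such as $\gamma_k\tv_k(0)$. That is a genuine re-verification, not a citation.
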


	\paragraph{\textbf{Notation.}} For $R>0$ and $x_0\in\R^N$ we denote by $B_R(x_0)$ the ball of radius $R$ and center $x_0$. The symbol $o_n(1)$ denotes a vanishing real sequence as $n\to+\infty$. Hereafter, the letter $C$ will be used to denote positive constants which are independent of relevant quantities and whose value may change from line to line.

	\section{Preliminary results}\label{Sec_Prel}

    In this section, we begin by recalling classical results, as well as some consequences of the assumptions, that will be used throughout the proof. Moreover, we collect some examples of families of nonlinearities, to which our results apply.
	
	\subsection{Elliptic regularity estimates}
    We begin by recalling classical local elliptic regularity estimates and a Harnack-type inequality, which will be frequently used in the sequel.
	\begin{lem}[\cite{Serrin}, Theorem 2]\label{LocalEstimates_Serrin}
		Let $u$ be a weak solution of $-\Delta u=h$ in $B_{2R}(0)\subset\Omega\subset\R^2$. Then,
		\begin{equation*}
			\|u\|_{C^{1,\alpha}(B_R(0))}\leq C\big(\|u\|_{L^2(B_{2R}(0))}+KR\big),
		\end{equation*}
		for a suitable $\alpha\in(0,1)$ and positive constants $C(\alpha, R, \|h\|_{L^\frac2{2-\varepsilon}(\Omega)})$ and $K(R, \|h\|_{L^\frac2{2-\varepsilon}(\Omega)})$, for some $\varepsilon\in(0,1)$.
	\end{lem}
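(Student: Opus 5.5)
This lemma is quoted from Serrin, so I would only check that it follows from classical elliptic theory in the plane. Set $p:=\frac2{2-\varepsilon}\in(1,2)$, so that $h\in L^p$ with $p>1$.

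The first step is to split $u$ in $B_{2R}(0)$. Let $w$ be the Newtonian potential of $h\chi_{B_{2R}}$, namely
\[
w(x)=\frac1{2\pi}\int_{B_{2R}}\log\frac1{|x-y|}\,h(y)\dd y .
\]
By Calder\'on--Zygmund theory, $w\in W^{2,p}(B_{2R})$ with $\|w\|_{W^{2,p}}\le C\|h\|_{L^p}$. In dimension two the Sobolev embedding gives $W^{2,p}\hookrightarrow C^{1,\alpha}$ for $p>1$, with $\alpha=1-\frac2p\cdot\frac{2-p}{2}\cdot\ldots$, more simply $\alpha=2-\frac2p=\varepsilon$. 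Hence
\[
\|w\|_{C^{1,\alpha}(B_{2R})}\le C(R)\,\|h\|_{L^p}.
\]
The remainder $z:=u-w$ is harmonic in $B_{2R}$.

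The second step uses interior estimates for harmonic functions, derived from the mean value property:
\[
\|z\|_{C^{2}(B_R)}\le C(R)\,\|z\|_{L^2(B_{2R})}\le C(R)\big(\|u\|_{L^2(B_{2R})}+\|w\|_{L^2(B_{2R})}\big).
\]
Moreover $\|w\|_{L^2(B_{2R})}\le C R\,\|w\|_{L^\infty}\le C(R)\|h\|_{L^p}$. Combining the two steps yields
\[
\|u\|_{C^{1,\alpha}(B_R)}\le C\big(\|u\|_{L^2(B_{2R})}+KR\big)
\]
with $K=K(R,\|h\|_{L^p})$. The $R$-dependence of the constant is harmless, since the lemma allows $C$ and $K$ to depend on $R$.

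The only delicate point is that $u$ is merely a weak solution, so the splitting must be justified. I would argue that $-\Delta z=0$ weakly, and therefore $z$ is smooth by Weyl's lemma. Serrin's original proof covers more general quasilinear operators via Moser iteration, but for the Laplacian the argument above suffices.
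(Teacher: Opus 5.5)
\textbf{The embedding step fails, and the statement as written is false.} In dimension two, $W^{2,p}\hookrightarrow C^{1,\alpha}$ requires $p>2$, with $\alpha=1-\frac2p$. For $p=\frac{2}{2-\varepsilon}\in(1,2)$ you only get $\nabla w\in W^{1,p}\hookrightarrow L^{p^*}$ with $p^*=\frac{2p}{2-p}=\frac{2}{1-\varepsilon}<\infty$, hence $w\in C^{0,\varepsilon}$. The exponent $2-\frac2p=\varepsilon$ you computed is the H\"older exponent of $w$ itself, not of $\nabla w$. The rest of your splitting is correct: the $L^\infty$ bound on the logarithmic potential, Weyl's lemma for $z=u-w$, and the interior estimates for harmonic functions. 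So your argument proves
\[
\|u\|_{C^{0,\varepsilon}(B_R(0))}\le C\big(\|u\|_{L^2(B_{2R}(0))}+KR\big),
\]
but it gives no $L^\infty$ control of $\nabla u$.

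This cannot be repaired. Take $u(x)=|x|^\gamma$ in $B_1(0)$ with $\gamma\in(\varepsilon,1)$. Then $u\in H^1(B_1(0))$, and the flux of $\nabla u$ through $\partial B_\rho(0)$ is $2\pi\gamma\rho^\gamma\to0$, so there is no Dirac mass at the origin. Hence $u$ is a weak solution of $-\Delta u=h$ with $h=-\gamma^2|x|^{\gamma-2}$, and $h\in L^{\frac{2}{2-\varepsilon}}(B_1(0))$ precisely because $\gamma>\varepsilon$. Yet $|\nabla u|=\gamma|x|^{\gamma-1}$ is unbounded near the origin, so no $C^{1,\alpha}$ bound can hold for any $\alpha$.

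The paper gives no proof of its own, only the citation. Serrin's estimates for right-hand sides in $L^{\frac{n}{n-\varepsilon}}$ are local boundedness and H\"older continuity results, so the $C^{1,\alpha}$ norm in the statement should be an $L^\infty$ (or $C^{0,\alpha}$) norm. That corrected version is exactly what your argument establishes. A genuine $C^{1,\alpha}$ estimate needs $h\in L^p$ with $p>2$. That is the situation wherever the paper actually passes to $C^{1,\alpha}_{loc}$ limits: there $0\le-\Delta\overline v_k\le 1$, and $-\Delta\widetilde u_k$ is locally bounded.
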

	\begin{lem}[\cite{Serrin}, Theorem 6]\label{Harnack_Serrin}
		Let $u\geq0$ be a solution of $-\Delta u=h$ in $B_{3R}(0)\subset\Omega\subset\R^2$. Then
		$$\max_{B_R(0)}u\leq C\big(\min_{B_R(0)}u+K\big),$$
		where the constants $C,K$ depending on $R$ and on $\|h\|_{L^\frac{2}{2-\varepsilon}(\Omega)}$, $\varepsilon\in(0,1)\,$.
	\end{lem}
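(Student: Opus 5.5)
The statement just above is Lemma \ref{Harnack_Serrin}, a Harnack inequality quoted from \cite{Serrin}, Theorem 6. I will derive it from the linear theory by splitting $u$ into a harmonic part and a Newtonian potential. The integrability $h\in L^{\frac2{2-\varepsilon}}$ with exponent strictly above $1$ is what makes the potential part bounded in dimension two.

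\textbf{Step 1 (splitting).} On $B_{3R}(0)$ I write $u=w+z$. Here $z$ solves $-\Delta z=h$ in $B_{3R}(0)$ with $z=0$ on $\partial B_{3R}(0)$, and $w=u-z$ is harmonic in $B_{3R}(0)$.

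\textbf{Step 2 (bound on $z$).} By the Green representation on the ball, $|z(x)|\le \frac1{2\pi}\int_{B_{3R}}\big|\log\frac{C R}{|x-y|}\big|\,|h(y)|\dd y$ for $x\in B_{3R}(0)$. The logarithmic kernel lies in $L^{s}(B_{3R})$ for every $s<\infty$, in particular for $s=\frac2\varepsilon$, the conjugate exponent of $\frac2{2-\varepsilon}$. Hölder's inequality then gives $\|z\|_{L^\infty(B_{3R})}\le K_0$, with $K_0$ depending only on $R$, $\varepsilon$ and $\|h\|_{L^{\frac2{2-\varepsilon}}(\Omega)}$.

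\textbf{Step 3 (Harnack for $w$).} Since $u\ge0$, we have $w\ge -K_0$ on $B_{3R}(0)$, so $w+K_0$ is a nonnegative harmonic function on $B_{3R}(0)$. The classical Harnack inequality, obtained from the Poisson kernel on $B_{2R}(x)\subset B_{3R}(0)$ for $x\in B_R(0)$, gives
$$\max_{B_R}(w+K_0)\le C_H\min_{B_R}(w+K_0),$$
with $C_H$ a universal constant; by scaling it is independent of $R$.

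\textbf{Step 4 (conclusion).} Combining the steps,
$$\max_{B_R}u\le \max_{B_R}w+K_0\le C_H\big(\min_{B_R}w+K_0\big)\le C_H\big(\min_{B_R}u+2K_0\big).$$
This is the claim with $C=C_H$ and $K=2K_0$.

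\textbf{Main obstacle.} The only real point is Step 2: the $L^\infty$ bound for the potential term must use the strict inequality $\frac2{2-\varepsilon}>1$. With merely $h\in L^1$, $z$ can be unbounded, which is exactly the Brezis--Merle phenomenon. One also has to check that the Hölder constant is uniform in the center $x$; this follows by translation, using $B_{3R}(0)\subset B_{6R}(x)$.

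If $u$ is only a weak solution, I would justify the splitting by uniqueness for the Dirichlet problem in $W^{1,p}_0$. Alternatively, one can cite Serrin's Moser-iteration proof, which handles general divergence-form operators, whereas here the Laplacian makes the potential-theoretic argument sufficient.
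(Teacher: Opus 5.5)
Your proof is correct, but it follows a different route from the cited result. The paper gives no argument of its own; it quotes Serrin's Theorem 6, which is proved by Moser iteration for general quasilinear divergence-form equations. The hypothesis $h\in L^{\frac2{2-\varepsilon}}$ is simply the two-dimensional, quadratic-growth instance of Serrin's integrability assumption on the lower-order terms.

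You instead use that the operator is exactly the Laplacian. The Green potential $z$ is bounded because the logarithmic kernel lies in $L^{2/\varepsilon}$ uniformly in the pole. Hence $w+K_0$ is a nonnegative harmonic function, and the Poisson-kernel Harnack inequality finishes the argument. For that step, comparing every point of $B_R(0)$ with the center through the Poisson kernel of $B_{3R}(0)$ gives $C_H=4$ directly, with no need to pass through the balls $B_{2R}(x)$.

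What your route buys is elementarity and explicit constants. $C$ is universal, and by scaling $K\le C_\varepsilon R^{\varepsilon}\|h\|_{L^{\frac2{2-\varepsilon}}(B_{3R}(0))}$. This makes transparent that only the norm of $h$ on $B_{3R}(0)$ enters, which is precisely the form $K(\|\Delta\tw_k\|_\infty,R)$ in which the paper applies the lemma on the expanding domains $\Omega_k$ in the blow-up analysis. Serrin's approach buys generality (variable coefficients, nonlinear structure), none of which is needed here.

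One small correction to your closing remark on weak solutions. The splitting $u=z+w$ is just a definition. What you actually need is that $w=u-z$, as a distributional solution of $\Delta w=0$, is a smooth harmonic function (Weyl's lemma). Uniqueness for the Dirichlet problem plays no role.
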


	Crucial for the blow-up analysis will be a uniform estimate on the $L^1$ norm of the nonlinear terms. This in fact holds for general domains since it follows from an application of the moving plane technique and the Kelvin transform.
    
	\begin{prop}[Theorem 1.2, \cite{dFdOR}]\label{Prop_Lambda}
		Assume that there exists $c>0$ and $p>0$ such that $f(t)\leq c\e^{pt}$ for all $t>0$, or that the same holds for $g$. Then there exists a positive constant $\Lambda$, depending only on $f$, $g$, and $\Omega$, such that
        \begin{equation}\label{Lambda}\tag{$\Lambda$}
            \intOmega f(v)\leq\Lambda\quad\mbox{and}\quad\intOmega g(u)\leq\Lambda
        \end{equation}
        for all solutions of \eqref{sys}.
	\end{prop}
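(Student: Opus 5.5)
The plan is to follow the classical three-step scheme of de Figueiredo, Lions and Nussbaum \cite{dFLN}, adapted to the Hamiltonian coupling as in \cite{dFdOR}. The steps are: a weighted $L^1$ estimate obtained by testing with the first eigenfunction, an $L^\infty$ bound in a boundary strip via moving planes, and a combination of the two. Throughout, the positivity of $f,g$ and the maximum principle give $u,v>0$ in $\Omega$. The growth bound $f(t)\le c\e^{pt}$ (or the same for $g$) guarantees, through \cite[Theorem 1.1]{dFdOR}, that solutions are classical, so that the moving plane arguments below are legitimate.

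\emph{Step 1 (weighted estimate).} Let $\varphi_1>0$ be the first Dirichlet eigenfunction, normalised by $\intOmega\varphi_1=1$. Testing both equations of \eqref{sys} with $\varphi_1$ gives
$$\intOmega f(v)\varphi_1=\lambda_1\intOmega u\varphi_1\,,\qquad \intOmega g(u)\varphi_1=\lambda_1\intOmega v\varphi_1\,.$$
By $(H_1)$–$(H_3)$, $f$ and $g$ are superlinear at infinity (in fact superpolynomial, Lemma \ref{TM-subcritiche}). Hence $f(t)\ge 2\lambda_1 t-C$ and $g(t)\ge 2\lambda_1 t-C$ for all $t\ge0$. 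Inserting these into the identities yields
$$\lambda_1\intOmega u\varphi_1\ge 2\lambda_1\intOmega v\varphi_1-C\ge 4\lambda_1\intOmega u\varphi_1-3C\,.$$
Therefore $\intOmega u\varphi_1$ and $\intOmega v\varphi_1$ are uniformly bounded, and so are $\intOmega f(v)\varphi_1$ and $\intOmega g(u)\varphi_1$.

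\emph{Step 2 (boundary strip).} Next I would show that there exist $\delta>0$ and a family of cones such that, for every $x$ with $\mathrm{dist}(x,\dOmega)<\delta$, both $u$ and $v$ are monotone along a set of directions forming a cone of fixed opening. This uses the moving plane method for systems started at the boundary. If $\Omega$ is not convex, one first applies a Kelvin transform near each boundary point to make the relevant piece of the boundary strictly convex; the transformed system keeps the same structure with a positive weight. The consequence is that $u(x)$ and $v(x)$ are bounded by their averages over a region $\Gamma_x$ of positive measure contained in the fixed compact set $\Omega_\delta:=\{\mathrm{dist}(\cdot,\dOmega)\ge\delta\}$. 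On $\Omega_\delta$ we have $\varphi_1\ge c_\delta>0$, so Step 1 gives $u,v\le C$ in the strip $\{\mathrm{dist}(\cdot,\dOmega)<\delta\}$.

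\emph{Step 3 (conclusion).} On $\Omega_\delta$ we estimate directly $\int_{\Omega_\delta}f(v)\le c_\delta^{-1}\intOmega f(v)\varphi_1\le C$. On the strip, $f(v)\le\max_{[0,C]}f$ is bounded. Summing the two pieces gives the bound \eqref{Lambda} for $f(v)$, and the same argument applies to $g(u)$.

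\emph{Main obstacle.} The hardest point is Step 2. Write $w=u_\lambda-u$ and $z=v_\lambda-v$ for the reflected differences. They satisfy $-\Delta w=c_1(x)z$ and $-\Delta z=c_2(x)w$ with $c_1,c_2$ difference quotients of $f,g$. These coefficients are nonnegative, so the system is cooperative, only when $f,g$ are nondecreasing, whereas $(H_2)$ gives monotonicity only for $t>t_0$. I would first try to use the maximum principle for weakly coupled systems in narrow domains. This only needs $c_1,c_2$ bounded, which holds because $u,v$ are bounded near the plane at the start of the procedure. I would then use the Kelvin transform with a uniform control of the width $\delta$, exploiting that $\dOmega$ is smooth and compact. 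If this fails, the fallback is the approach of \cite{dFdOR}: moving planes for the weighted transformed system, where positivity and monotonicity of the nonlinearities are imposed only where needed.
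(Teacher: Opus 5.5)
Your three-step architecture is the one behind the cited result. The $\varphi_1$-weighted bound of Step 1 is what the paper itself reproduces in \eqref{Bound_l1loc}, where it notes that this is all the proof of \cite[Theorem 1.2]{dFdOR} needs. Step 2 is the de Figueiredo--Lions--Nussbaum moving plane/Kelvin transform argument, and Steps 1 and 3 are fine. You are also right to import superlinearity from the standing hypotheses: for $f(t)=g(t)=\lambda_1 t$ the statement as written fails, since $(s\varphi_1,s\varphi_1)$ solves the system for every $s>0$.

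The gap is your proposed way around the `main obstacle'. The maximum principle in narrow domains for $-\Delta w=c_1z$, $-\Delta z=c_2w$ does not follow from boundedness of $c_1,c_2$; it requires the system to be cooperative, $c_1,c_2\geq0$. For instance, take $c_1\equiv-1$, $c_2\equiv0$, $w=0$ on $\partial D$ and $z\geq0$, $z\not\equiv0$ on $\partial D$. Then $z$ is positive and harmonic in $D$, so $-\Delta w=-z<0$ and $w<0$ in $D$, however narrow $D$ is. The strong maximum principle you need to keep moving the plane fails for the same reason.

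In the initial caps $u,v,u_\lambda,v_\lambda$ are all close to $0$. So your difference quotients are evaluated exactly on the range where $(H_2)$ gives no sign information, and Step 2 cannot be closed from $(H_1)$--$(H_3)$ alone. Your fallback of imposing monotonicity ``only where needed'' amounts to imposing it near $t=0$, i.e.\ on all of $[0,+\infty)$. That is the missing ingredient: $f,g$ nondecreasing on $[0,+\infty)$. It is the standing hypothesis of the moving plane theory for systems, and the paper also uses it tacitly when it invokes \cite[Theorem 1]{Troy} for radial symmetry.

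With this hypothesis your Step 2 goes through. In the only case the paper needs, $\Omega=B_1(0)$, it is immediate from Troy's radial monotonicity: for $|x|\geq\frac12$ one has $u(x)\leq\frac1{|A|}\int_A u$ with $A=B_{1/2}(0)\setminus B_{1/4}(0)$, on which $\varphi_1$ is bounded below, and no Kelvin transform is needed.
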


    Note that the assumption needed by Proposition \ref{Prop_Lambda} is satisfied under our hypothesis \eqref{BM} or \eqref{nonBM} since $(H_b)$ implies that $\slfrac{f'}f(t)\leq\nu$ for $t>t_\nu$ for some $\nu>0$ and $t_\nu>0$, which in turn yields $f(t)\leq f(t_\nu)\e^{\nu(t-t_\nu)}$ for $t>t_\nu$.

	\subsection{Consequences of the assumptions}

    First, we report examples of couples of nonlinearities which verify our assumptions.

    \begin{examples}
        \begin{enumerate}
            \item The nonlinearities in \eqref{NL_model}, namely $f(t)=\etal$, $g(t)=\etb$ with for $t>t_0>0$, $0<\alpha\le1\le\beta<2$ and the ``critical tuning'' $\alpha+\beta=2$ are our model case. It is easy to verify assumptions $(H_1)$-$(H_6)$ and $(H_b)$ holds with $b=\alpha\beta$; it is also clear that mild modifications of such $f,g$ in the spirit of \cite{MR,Rom_N}, namely of the kind $(\log t)^\tau t^p\e^{t^\alpha}$ with $0<\alpha<2$, $p\ge0$, and $\tau\in\R$, or $\frac{\e^{\gamma t}}{t^q}$ with $\gamma>0$ and $q\in\R$, also fulfill assumptions $(H_1)$-$(H_6)$ and $(H_b)$.
            \item A more general class of growths which is covered by our result is
            \begin{equation}\label{expro}
                f(t)=\e^{t^\alpha(\log t)^p}\quad\mbox{and}\quad g(t)=\e^{\frac{t^\beta}{(\log t)^q}}
            \end{equation}
            for $\alpha,\beta>0$ and $\alpha+\beta=2$, when one chooses $p=q\in\R$. In the case $p>1$, this example also allows somehow to retrieve the borderline case $\alpha=0$, $\beta=2$. Indeed, $f(t)=\e^{(\log t)^p}$ and $g(t)=\e^{\frac{t^2}{(\log t)^q}}$ satisfy $(H_b)$ if one modifies the ``critical tuning'' as $q=p+1$ (in this case $b=2p$). We note that $(H_1)$-$(H_5)$ are verified for all $p>1$. On the other hand, $(H_6)$ is fulfilled in case $p>3$ with $a>1$, and if $p=3$ with $a\in(1,3]$; if, instead, $p\in(1,3)$, then $(H_6)$ does not hold anymore, but $(H_6')$ does.
            \item In the ``double-borderline'' case $\alpha=0$ and $p=1$ in \eqref{expro}, it is clear that the nonlinearity $f(t)=\e^{\log t}=t$ is not allowed, but in principle we can keep going modifying it in order to obtain again admissible nonlinearities, e.g. $f(t)=\e^{\log t(\loglog t)^p}$, which may be coupled with $g(t)=\e^{\frac{t^2}{(\loglog t)^p}}$, so that $(H_b)$ holds with $b=2$. Here assumptions $(H_1)$-$(H_5)$ are verified, while $(H_6)$ never holds. However, the more relaxed assumption $(H_6')$ continues to hold, provided $p\geq2$.
        \end{enumerate}
    \end{examples}

    \begin{remark}
        We envision that the lower bound $p\geq2$ in the last example is just a technical obstruction, and that our analysis can be pushed forward, at least up to $p>1$, see the discussion in Remark \ref{Rmk_f7} in Section \ref{+infty}.
    \end{remark}

    From this point on, we present some properties which can be deduced by the conditions we impose on our nonlinearities. In what follows, $(H_1)$ and $(H_2)$ are always assumed.
	
	\begin{lem}\label{growth_below_g}
		Under \eqref{BM} or \eqref{nonBM}, there exist $C_1,C_2>0$ and $\delta_1,\delta_2>0$ such that $f(s)<C_1\e^{\delta_1s}$ and $g(s)>C_2\e^{\delta_2s}$ for $s$ large enough.
	\end{lem}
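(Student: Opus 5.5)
We argue directly on the logarithmic derivatives $\slfrac{f'}f$ and $\slfrac{g'}g$, which by $(H_1)$ and $(H_2)$ are well defined and positive for $t>t_0$, and then integrate. Writing, for $s>t_1\ge t_0$,
\begin{equation*}
\log f(s)=\log f(t_1)+\int_{t_1}^s\frac{f'}f(\tau)\dd\tau,\qquad \log g(s)=\log g(t_1)+\int_{t_1}^s\frac{g'}g(\tau)\dd\tau,
\end{equation*}
it suffices to find $\delta_1,\delta_2>0$ and $t_1$ such that $\slfrac{f'}f\le\delta_1$ and $\slfrac{g'}g\ge\delta_2$ on $(t_1,+\infty)$. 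Then $f(s)\le f(t_1)\e^{\delta_1(s-t_1)}$ and $g(s)\ge g(t_1)\e^{\delta_2(s-t_1)}$ for $s>t_1$. Choosing $C_1=2f(t_1)\e^{-\delta_1t_1}$ makes the first inequality strict, and $C_2=\tfrac12 g(t_1)\e^{-\delta_2t_1}$ does the same for the second.

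\textbf{Liouville case.} Under \eqref{BM} we have $\slfrac{f'}f\to p\in(0,+\infty)$ and $\slfrac{g'}g\to q\in(0,+\infty)$. Hence for $t$ large $\slfrac{f'}f\le 2p$ and $\slfrac{g'}g\ge q/2$, so we may take $\delta_1=2p$ and $\delta_2=q/2$.

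\textbf{Non-Liouville case.} Under \eqref{nonBM} we have $\slfrac{f'}f\to0$, so $\slfrac{f'}f\le1$ eventually and $\delta_1=1$ works; in fact $\delta_1$ can be taken arbitrarily small. Also $\slfrac{g'}g\to+\infty$, so $\slfrac{g'}g\ge1$ eventually and $\delta_2=1$ works. Alternatively, $(H_b)$ alone gives the same conclusion in both cases: if $\slfrac{f'}f$ is bounded above by some $\nu$, then $\slfrac{g'}g\ge b/(2\nu)$ for $t$ large.

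There is essentially no obstacle here. The only point needing care is to make sure the ratios are taken only for $t>t_0$, where $f,g>0$ and $f',g'>0$; any finite interval of values of $s$ is irrelevant, since the statement concerns large $s$ only.
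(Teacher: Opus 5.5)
Your proposal is correct and follows essentially the same route as the paper: use the limits in \eqref{BM} or \eqref{nonBM} to bound $f'/f$ eventually from above and $g'/g$ eventually from below by positive constants, then integrate $(\log f)'$ and $(\log g)'$. Your case split and your explicit choice of $C_1,C_2$ to get strict inequalities just spell out details the paper leaves implicit when it says the estimate for $f$ ``follows analogously''.
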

	\begin{proof}
		By assumption, $\frac{g'}g(s)\to q\in\R^+\cup\{+\infty\}$ as $s\to+\infty$. Hence, for $s>s_0$ large one has $\frac{g'}g(s)>\delta>0$ for some $\delta\in(0,q)$. Integrating, one has $\log g(s)>\delta s+C_0$, from which the conclusion follows. The estimate for $f$ follows analogously, since $\frac{f'}f(s)\to p\in\R^+\cup\{0\}$ as $s\to+\infty$.
	\end{proof}

        \begin{lem}\label{f<g}
            Under \eqref{nonBM}, there exists $s_1>0$ such that $f(s)<\e^s<g(s)$ for all $s>s_1$.
        \end{lem}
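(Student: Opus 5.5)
The plan is to integrate the logarithmic derivatives given by \eqref{nonBM}, exactly as in the proof of Lemma \ref{growth_below_g}, but now with the thresholds $1$ on both sides.

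First take $g$. Since $\slfrac{g'}g(s)\to+\infty$, there is $s_g>t_0$ such that $\slfrac{g'}g(s)\geq 2$ for $s>s_g$. Integrating on $(s_g,s)$ gives
$$\log g(s)\geq \log g(s_g)+2(s-s_g)=s+\bigl(s-2s_g+\log g(s_g)\bigr).$$
The bracket is positive once $s>2s_g-\log g(s_g)$, so $g(s)>\e^s$ for $s$ large. Here $g(s_g)>0$ by $(H_1)$, so the logarithm is well defined.

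The argument for $f$ is symmetric. Since $\slfrac{f'}f(s)\to0$, there is $s_f>t_0$ such that $0<\slfrac{f'}f(s)\leq\frac12$ for $s>s_f$; positivity comes from $(H_2)$ and is not actually needed. Integrating gives
$$\log f(s)\leq\log f(s_f)+\tfrac12(s-s_f)=s-\Bigl(\tfrac12 s+\tfrac12 s_f-\log f(s_f)\Bigr).$$
The bracket is positive for $s>2\log f(s_f)-s_f$, so $f(s)<\e^s$ for $s$ large.

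Taking $s_1$ to be the larger of the two thresholds gives the claim. The only point needing care is to use strict margins, namely slopes $2$ and $\frac12$ rather than $1$: with slope exactly $1$, the additive constants coming from the integration could spoil the comparison with $\e^s$. There is no real obstacle beyond this.
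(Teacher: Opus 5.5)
Your proof is correct and follows essentially the paper's route. The paper applies de l'H\^opital's rule to get $\log g(s)/s\to+\infty$ and $\log f(s)/s\to0$ from \eqref{nonBM} and then compares with the line $s$, while you make the same passage from the logarithmic derivatives to $\log f$ and $\log g$ by explicit integration with margins $2$ and $\tfrac12$, handling the additive constants correctly.
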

        \begin{proof}
            By de l'H\^opital's theorem and \eqref{nonBM}, 
            $$\frac{\log g(s)}s\to+\infty\quad\ \mbox{and}\quad\ \frac{\log f(s)}s\to0$$
            as $s\to+\infty$. Hence, letting $M>0$, there exists $s_M>0$ such that $\log g(s)>Ms$ and $\log f(s)<\frac sM$ for all $s>s_M$. Choosing $M=1$ we therefore infer $\log f(s)<s<\log g(s)$ for all $s>s_1$, which proves the lemma.
        \end{proof}
    
	\begin{lem}\label{f3_conseq}
		Under \eqref{BM} or \eqref{nonBM}, condition $(H_3)$ implies 
		\begin{itemize}
			\item[i)] $\displaystyle\left(\frac{f'}{f}\right)'\!(t)\to0\ $ and $\ \displaystyle\left(\frac{g}{g'}\right)'\!(t)\to0$ as $t\to+\infty\,$;
			\item[ii)] $\displaystyle t\,\frac{f'}f(t)\to+\infty\ $ and $\ \displaystyle t\,\frac{g'}g(t)\to+\infty$ as $t\to+\infty\,$;
			\item[iii)] $\displaystyle \frac1t\frac{f'}f(t)\to0\ $ and $\ \displaystyle \frac1t\frac{g'}g(t)\to0$ as $t\to+\infty\,$.
		\end{itemize}
	\end{lem}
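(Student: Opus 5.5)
The plan is to set $\varphi:=\slfrac{f'}f$ and $\psi:=\slfrac{g'}g$, both defined for $t$ large by $(H_1)$–$(H_2)$. Then $(H_3)$ says $(1/\varphi)'\to0$ and $\psi'\to0$, and the three items reduce to elementary facts about $\varphi$ and $\psi$ combined with the limits in \eqref{BM} or \eqref{nonBM}.

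\emph{Item i).} I compute $\varphi'=-\varphi^2\,(1/\varphi)'$ and $(1/\psi)'=-\psi'/\psi^2$. Since $\varphi\to p\in[0,+\infty)$ under either \eqref{BM} or \eqref{nonBM}, $\varphi^2$ is bounded, and so $\varphi'\to0$. Similarly, $\psi\to q\in(0,+\infty]$, so $1/\psi^2$ is bounded for $t$ large, and $(g/g')'=(1/\psi)'\to0$.

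\emph{Item iii).} For $f$ this is immediate, because $\varphi$ is bounded. For $g$: in case \eqref{BM}, $\psi\to q$ finite, so $\psi/t\to0$. In case \eqref{nonBM}, I use $\psi'\to0$: given $\varepsilon>0$ we have $|\psi'|<\varepsilon$ for $t>T$, hence $\psi(t)\le\psi(T)+\varepsilon(t-T)$. This gives $\limsup\psi(t)/t\le\varepsilon$ (the l'H\^opital/Ces\`aro argument).

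\emph{Item ii).} For $g$, $\psi\to q>0$ (possibly $+\infty$), so $t\psi(t)\to+\infty$ trivially. For $f$ in case \eqref{BM}, $\varphi\to p>0$, so this is again trivial. The only nontrivial case is $f$ under \eqref{nonBM}, where $\varphi\to0$. Here I write $h:=1/\varphi=f/f'$, which tends to $+\infty$ and satisfies $h'\to0$. The same Ces\`aro argument gives $h(t)/t\to0$, that is, $t\varphi(t)=t/h(t)\to+\infty$.

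This last case is the only real obstacle, and it is handled by the mean value estimate $h(t)\le h(T)+\varepsilon(t-T)$. One should also note that $f'>0$ for $t>t_0$ by $(H_2)$, so $h$ is well defined and positive there. Finally, $f'>0$ and $f\to+\infty$ guarantee that $\varphi$ does not vanish, so the divisions above are legitimate for $t$ large.
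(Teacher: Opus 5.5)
Your proposal is correct and follows essentially the same route as the paper. For (i) you use the same identities $(f'/f)'=-(f'/f)^2(f/f')'$ and $(g/g')'=-(g/g')^2(g'/g)'$. You also reduce the nontrivial cases under \eqref{nonBM} to the fact that a positive function $h$ with $h'\to0$ satisfies $h(t)/t\to0$, applied to $h=f/f'$ and $h=g'/g$; the paper obtains this via de l'H\^opital, while you use an equivalent mean-value (Ces\`aro) bound.
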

	\begin{proof}
		(i) Under \eqref{BM} or \eqref{nonBM}, it is easy to see that
		\begin{equation*}
			\left(\frac{f'}{f}\right)'\!(t)=-\left(\frac f{f'}\right)'\!(t)\left(\frac{f'}f(t)\right)^2\to0
		\end{equation*}
		and similarly
		\begin{equation*}
			\left(\frac{g}{g'}\right)'\!(t)=-\left(\frac{g'}g\right)'\!(t)\left(\frac g{g'}(t)\right)^2\to0\,.
		\end{equation*}
		(ii)-(iii) If \eqref{nonBM} is assumed, by de l'H\^opital's theorem, we compute
		\begin{equation*}
			\lim_{t\to+\infty}\frac{\slfrac{f}{f'}(t)}{t}=\lim_{t\to+\infty}\left(\frac{f}{f'}\right)'(t)=0\,.
        \end{equation*}
        Since $\slfrac{f}{f'}>0$ by $(H_1)$ and $(H_2)$, we get
        \begin{equation}\lim_{t\to+\infty}t\,\frac{f'}f(t)=\lim_{t\to+\infty}\frac{t}{\slfrac{f}{f'}(t)}= +\infty.
        \end{equation}
		Similarly
		\begin{equation*}
			\lim_{t\to+\infty}\frac1t\frac{g'}g(t)=\lim_{t\to+\infty}\frac{\slfrac{g'}g(t)}{t}=\lim_{t\to+\infty}\left(\frac{g'}g\right)'\!(t)=0.
		\end{equation*}
		All other conclusions are trivially true.
	\end{proof}

    We next prove by $(H_3)$ that both $f$ and $g$ are superpolynomial as well as subcritical in the sense of Trudinger-Moser. Note that the only assumption $(H_b)$ would not have implied either of the two (e.g. $f(t)=t^p$ for $p>1$ and $g(t)=\e^{t^2}$ satisfy $(H_b)$ with $b=2p$).
	\begin{lem}\label{TM-subcritiche}
		Under \eqref{BM} or \eqref{nonBM}, condition $(H_3)$ implies that for all $p>0$ one has
        $$\lim_{t\to+\infty}\frac{f(t)}{t^p}=+\infty=\lim_{t\to+\infty}\frac{g(t)}{t^p}$$
        and, for all $\gamma>0$,
        $$\lim_{t\to+\infty}\frac{f(t)}{\e^{\gamma t^2}}=0=\lim_{t\to+\infty}\frac{g(t)}{\e^{\gamma t^2}}\,.$$
	\end{lem}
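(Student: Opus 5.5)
The plan is to work with the logarithms $\phi:=\log f$ and $\psi:=\log g$. Then $\phi'=\slfrac{f'}f$ and $\psi'=\slfrac{g'}g$, and both statements of the lemma become growth estimates for $\phi$ and $\psi$. Namely, we want $\phi(t)-p\log t\to+\infty$ for every $p>0$, and $\phi(t)-\gamma t^2\to-\infty$ for every $\gamma>0$, and the same for $\psi$.

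\emph{Superpolynomial growth.} By Lemma \ref{f3_conseq} ii), $t\phi'(t)\to+\infty$ and $t\psi'(t)\to+\infty$. Fix $p>0$. There is $t_p$ such that $\phi'(s)\geq 2p/s$ for $s>t_p$. Integrating from $t_p$ to $t$ gives
$$\phi(t)\geq\phi(t_p)+2p\log\frac t{t_p}.$$
Hence $\log\bigl(f(t)/t^p\bigr)=\phi(t)-p\log t\geq p\log t-C_p\to+\infty$. The argument for $g$ is identical.

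\emph{Trudinger--Moser subcriticality.} By Lemma \ref{f3_conseq} iii), $\phi'(t)/t\to0$ and $\psi'(t)/t\to0$. Fix $\gamma>0$. There is $t_\gamma$ such that $\phi'(s)\leq\gamma s$ for $s>t_\gamma$. Integrating gives
$$\phi(t)\leq\phi(t_\gamma)+\tfrac\gamma2\bigl(t^2-t_\gamma^2\bigr).$$
So $\phi(t)-\gamma t^2\leq C_\gamma-\tfrac\gamma2t^2\to-\infty$, i.e. $f(t)/\e^{\gamma t^2}\to0$. The same argument applies to $g$.

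The case distinction between \eqref{BM} and \eqref{nonBM} enters only through Lemma \ref{f3_conseq}. In the Liouville case the limits ii) and iii) are immediate, since $\phi'$ and $\psi'$ tend to positive finite constants. In the non-Liouville case they use $(H_3)$ together with de l'H\^opital, which is exactly what Lemma \ref{f3_conseq} provides. So there is no real obstacle.

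The only point needing care is that $f$ and $g$ are positive on $\R^+$ by $(H_1)$, so $\phi$ and $\psi$ are well defined and $C^1$ on $(0,+\infty)$. This makes the integrations above legitimate for $t$ large.
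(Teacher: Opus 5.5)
Your proof is correct and follows essentially the paper's argument: both rest on Lemma \ref{f3_conseq} (ii)--(iii) and transfer the asymptotics of $(\log f)'$ and $(\log g)'$ to $\log f$ and $\log g$. The paper does this by applying de l'H\^opital to $\log f(t)/t^2$ and $\log f(t)/\log t$, while you integrate the derivative bounds directly, which is the same idea written out explicitly.
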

	\begin{proof}
        By Lemma \ref{f3_conseq} (iii), we know that 
        $$
        \lim_{t \to +\infty} \frac{\log f(t)}{t^2} = \lim_{t\to + \infty}\frac{f'(t)}{2 t f(t)}=0\,.
        $$ 
        Therefore
    	\begin{equation*}
    		\frac{f(t)}{\e^{\gamma t^2}}=\e^{\log(f(t))-\gamma t^2}=\e^{o(t^2)-\gamma t^2 }\to0
    	\end{equation*}
    	as $t\to+\infty$, for all $\gamma >0$. Similarly, Lemma \ref{f3_conseq} (ii) yields 
        $$
        \lim_{t\to +\infty}\frac{\log f(t)}{\log t} = \lim_{t\to +\infty} \frac{t f'(t)}{f(t)} = +\infty\,,
        $$
        so
        $$
        \frac{f(t)}{t^p} =\e^{\log f(t)- p \log t} =\e^{\log f(t)+o(\log f(t))} \to +\infty 
        $$
        as $t \to +\infty$. Since \eqref{BM} and Lemma \ref{f3_conseq}(iii) are symmetric for $f$ and $g$, the same proof holds also for $g$.
	\end{proof}
    
	\begin{lem}\label{Lemma_f'fg}
		Under \eqref{BM} or \eqref{nonBM}, $(H_b)$, and $(H_3)$, we have $\displaystyle\frac{f'(s)}{f(s)}g(s)\to+\infty$ as $s\to+\infty$.
	\end{lem}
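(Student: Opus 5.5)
We need $\frac{f'}{f}(s)\,g(s)\to+\infty$. We split according to the two regimes \eqref{BM} and \eqref{nonBM}.

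\emph{Liouville case \eqref{BM}.} Here $\frac{f'}f(s)\to p>0$ and $g(s)\to+\infty$ by $(H_1)$, so the product diverges immediately.

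\emph{Non-Liouville case \eqref{nonBM}.} Now $\frac{f'}f(s)\to0$, so we must compare its decay with the growth of $g$. By $(H_b)$, for $s$ large,
$$\frac{f'}f(s)\ \geq\ \frac b2\,\frac{g}{g'}(s),$$
and therefore
$$\frac{f'}f(s)\,g(s)\ \geq\ \frac b2\,\frac{g^2}{g'}(s)=\frac b2\,\frac{g(s)}{\slfrac{g'}g(s)}.$$
It then suffices to show that $g$ grows faster than the logarithmic derivative $\slfrac{g'}g$. By Lemma \ref{f3_conseq}(iii), $\frac{g'}g(s)\leq s$ for $s$ large. Moreover, by Lemma \ref{TM-subcritiche} (or Lemma \ref{growth_below_g}), $g(s)/s\to+\infty$. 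Combining these,
$$\frac{g(s)}{\slfrac{g'}g(s)}\ \geq\ \frac{g(s)}{s}\to+\infty,$$
which gives the claim.

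All ingredients are earlier lemmas: the only point needing care is bounding $\slfrac{g'}g$ from above, since it diverges under \eqref{nonBM}. This is exactly what Lemma \ref{f3_conseq}(iii) provides, via $(H_3)$. Note also that $g'>0$ for large $s$ by $(H_2)$, so the division by $g'$ above is legitimate.
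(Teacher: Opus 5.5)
Your proof is correct and essentially matches the paper's. The paper writes $\frac{f'}{f}(s)\,g(s)=s\frac{f'}{f}(s)\cdot\frac{g(s)}{s}$ and uses the $f$-part of Lemma~\ref{f3_conseq}(ii) directly together with Lemma~\ref{TM-subcritiche}, so it does not need $(H_b)$ in this step, whereas you pass through $(H_b)$ and the $g$-part of Lemma~\ref{f3_conseq}(iii), which carries the same information once $(H_b)$ holds.
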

	\begin{proof}
		Under \eqref{BM} the result is trivial. If instead \eqref{nonBM} holds, using Lemmas \ref{f3_conseq}(ii) and \ref{TM-subcritiche} we get
		\begin{equation*}
			\frac{f'(s)}{f(s)}g(s)=s\frac{f'(s)}{f(s)}\cdot\frac{g(s)}s\to+\infty\,.
		\end{equation*}
	\end{proof}

	\begin{lem}\label{inverse_g}
		Under \eqref{nonBM} and $(H_3)$, the map $t\mapsto g(t)$ is invertible for $t$ large, and its inverse function satisfies
		$$\lim_{s\to+\infty}\frac{g^{-1}(s)}{\sqrt{\log s}}=+\infty\,.$$
	\end{lem}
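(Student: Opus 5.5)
Invertibility first: by $(H_2)$ we have $g'>0$ on $(t_0,+\infty)$, so $g$ is strictly increasing there. By $(H_1)$, $g(t)\to+\infty$. Hence $g:(t_0,+\infty)\to(g(t_0),+\infty)$ is a $C^1$ bijection, and $g^{-1}(s)\to+\infty$ as $s\to+\infty$.

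For the growth of the inverse, set $t=g^{-1}(s)$, so that $s=g(t)$ and $\log s=\log g(t)$. Then
$$\frac{g^{-1}(s)}{\sqrt{\log s}}=\frac{t}{\sqrt{\log g(t)}}=\left(\frac{t^2}{\log g(t)}\right)^{1/2}.$$
Since $t\to+\infty$ exactly when $s\to+\infty$, it suffices to prove that $\log g(t)/t^2\to0$ as $t\to+\infty$.

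This is the quantitative Trudinger--Moser subcriticality of $g$, and it follows from Lemma \ref{f3_conseq}(iii) via de l'H\^opital, as in the proof of Lemma \ref{TM-subcritiche}. The denominator $t^2\to+\infty$, so the theorem applies and gives
$$\lim_{t\to+\infty}\frac{\log g(t)}{t^2}=\lim_{t\to+\infty}\frac{g'(t)}{2t\,g(t)}=\frac12\lim_{t\to+\infty}\frac1t\frac{g'}g(t)=0,$$
the last equality being Lemma \ref{f3_conseq}(iii), which is available under \eqref{nonBM} and $(H_3)$. If one prefers not to check the de l'H\^opital hypotheses, the same bound follows directly: for every $\varepsilon>0$ there is $t_\varepsilon$ with $g'/g<2\varepsilon t$ on $(t_\varepsilon,+\infty)$, and integrating gives $\log g(t)\le \log g(t_\varepsilon)+\varepsilon t^2$.

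Combining the two steps, $t^2/\log g(t)\to+\infty$, and therefore $g^{-1}(s)/\sqrt{\log s}\to+\infty$. There is no real obstacle here. The only point to check is that $\log g(t)>0$ for large $t$, so that the square root is well defined; this holds because $g\to+\infty$ by $(H_1)$.
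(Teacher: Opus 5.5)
Your proof is correct and follows essentially the paper's argument. The paper invokes Lemma \ref{TM-subcritiche} to get $g(t)<\e^{\alpha t^2}$ for every $\alpha>0$ and $t$ large, which is exactly your fact $\log g(t)/t^2\to0$ (derived in that lemma's proof from Lemma \ref{f3_conseq}(iii)), and then substitutes $s=g(t)$ just as you do.
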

	\begin{proof}
		The invertibility of $g$ follows by $(H_2)$. For $\alpha>0$ by Lemma \ref{TM-subcritiche} there exists $t_\alpha$ such that $g(t)<\e^{\alpha t^2}$ for all $t>t_\alpha$. This yields $t>\sqrt{\frac1\alpha\log g(t)}$, that is, for $s=g(t)$, $\frac{g^{-1}(s)}{\sqrt{\log s}}>\frac1{\sqrt\alpha}$. The conclusion follows by the arbitrariness of $\alpha>0$.
	\end{proof}

    To end this section, we show that from our technical conditions $(H_6)$ and $(H_6')$ it is possible to deduce a bound from below on the growth of the function $f$. We will see that, although $(H_6)$ is of easy verification, it excludes some nonlinearities that we would like to handle with our argument, and this is the reason why we introduced also the condition $(H_6')$, which is a relaxed version of $(H_6)$ and which gives us a much better bound from below of $f$. Henceforth, the symbol $\ges$ is used when an inequality is true up to an omitted structural constant.	

	\begin{lem}\label{lem_f8_conseq}
		Suppose \eqref{nonBM}, then
		\begin{enumerate}
			\item[i)] if $(H_6)$ is assumed, then $f(t)\ges\e^{\frac a3(\log t)^3}$ for $t$ large;
			\item[ii)] if $(H_6')$ is assumed, then $f(t)\ges\e^{\frac a2\log t(\loglog t)^2}\,$ for $t$ large.
		\end{enumerate}
	\end{lem}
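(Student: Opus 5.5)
The plan is to integrate the differential inequalities for $(\log f)'=\slfrac{f'}{f}$ on $(t_0,t)$.

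For i), hypothesis $(H_6)$ gives $(\log f)'(\tau)\geq a\,\frac{(\log\tau)^2}{\tau}$ for $\tau>t_0$. The right-hand side is exactly the derivative of $\frac a3(\log\tau)^3$. Integrating from $t_0$ to $t$ then yields
\[
\log f(t)\geq \log f(t_0)+\frac a3(\log t)^3-\frac a3(\log t_0)^3 ,
\]
that is, $f(t)\geq C\e^{\frac a3(\log t)^3}$ with $C=f(t_0)\e^{-\frac a3(\log t_0)^3}>0$. Here $f(t_0)>0$ by $(H_1)$.

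For ii) the right-hand side of $(H_6')$ involves $F$, so I will bootstrap.

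First I need a crude lower bound on $\log F$. By Lemma \ref{TM-subcritiche}, $f$ is superpolynomial, so $F(t)\geq f(t/2)\,t/2$ for $t$ large, using that $f$ is increasing by $(H_2)$. Hence $F(t)\geq t^2$, and in fact $\log F(t)\geq 2\log t$ for $t$ large. This gives
\[
\loglog F(t)\geq \loglog t+\log 2\geq \loglog t .
\]

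Inserting this into $(H_6')$ gives $(\log f)'(\tau)\geq a\,\frac{(\loglog \tau)^2}{\tau}$ for $\tau$ large. Now compare with $\frac{d}{d\tau}\bigl[\log\tau(\loglog\tau)^2\bigr]=\frac{(\loglog\tau)^2+2\loglog\tau}{\tau}$. The extra term $2\loglog\tau/\tau$ does not allow a direct identification, so I will lose a constant.

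A simple way to handle this is the estimate
\[
\int_{t_1}^t\frac{(\loglog\tau)^2}{\tau}\dd\tau=\int_{\log t_1}^{\log t}(\log s)^2\dd s = \bigl[s(\log s)^2-2s\log s+2s\bigr]_{\log t_1}^{\log t},
\]
obtained with the substitution $s=\log\tau$. This equals $\log t(\loglog t)^2-2\log t\loglog t+O(\log t)$, which is at least $\frac12\log t(\loglog t)^2$ for $t$ large. Integrating the inequality for $(\log f)'$ therefore gives $\log f(t)\geq \frac a2\log t(\loglog t)^2-C$, which is the claim.

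The only delicate point is making sure the lower-order terms are absorbed. In ii) the factor $\tfrac12$ (indeed any factor $<1$) absorbs the terms $-2\log t\loglog t+O(\log t)$. In i) there are no lower-order terms, and only the starting value at $t_0$, or at some larger $t_1$ beyond which all the above holds, enters the constant implicit in $\ges$.
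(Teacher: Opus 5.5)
Your proof is correct and follows the paper's argument. For i) you integrate $(\log f)'$ directly; for ii) you pass to $\loglog F(t)\geq\loglog t$, integrate via the antiderivative $s(\log s)^2-2s\log s+2s$, and absorb the lower-order terms with the factor $\tfrac12$. The one difference is that the paper gets $\loglog F(t)\geq\loglog t$ from the trivial bound $F(t)>t$, which needs only $f\to+\infty$ from $(H_1)$. Your detour through Lemma \ref{TM-subcritiche} to get $F(t)\geq t^2$ is therefore unnecessary, and it also relies on $(H_3)$, which is not among the lemma's stated hypotheses.
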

        \begin{proof}
            \textit{i}) Rewriting $(H_6)$ as
            $$\frac{f'}f(t)\geq a\frac{(\log t)^2}t\,,$$
            and integrating, one infers $\log f(t)\geq\frac a3(\log t)^3+C(t_0)$, from which the estimate follows.

            \textit{ii}) One may rewrite $(H_6')$ as
            $$\frac{f'}f(t)\geq\frac at\left(\loglog F(t)\right)^2.$$
            Plainly it is $F(t)>t$ for large $t$, hence
            $$(\log f)'(t)\geq\frac at(\loglog t)^2,$$
            which, integrated, gives
            \begin{equation*}
                \begin{split}
                    \log f(t)&\geq a\left((\loglog t)^2-2\loglog t+2\right)\log t + C\\
                    &=a\log t(\loglog t)^2(1+o(1))\geq\frac a2\log t(\loglog t)^2.
                \end{split}
            \end{equation*}
        \end{proof}

    \begin{remark}
        This growth condition from below is somehow optimal for $(H_6)$-$(H_6')$ to hold. Indeed, e.g. if one considers nonlinearities of the kind $\e^{a\log t(\loglog t)^p}$, then $(H_6')$ is satisfied if $p>2$, or $p=2$ and $a>1$.
    \end{remark}

	\section{Scaling and compatibility condition}\label{Sec_scaling}
    
    Since $\Omega=B_1(0)$, by \cite[Theorem 1]{Troy} we know that all solutions of \eqref{sys} are radially symmetric and decreasing along the radial variable, fact that will be essential in our arguments. 
	\vskip0.2truecm
    We first show that, for a sequence $(u_k,v_k)_k$ of solutions of \eqref{sys}, either there exists a constant $C>0$ such that $\|u_k\|_\infty, \|v_k\|_\infty \leq C$, or 
	\begin{equation}\label{unbounded}
		\|u_k\|_\infty=u_k(0)\to+\infty\quad\mbox{and}\quad\|v_k\|_\infty=v_k(0)\to+\infty\,,
	\end{equation}
    by excluding the possibility that only one between $\|u_k\|_\infty$ and $\|v_k\|_\infty$ is unbounded. Indeed, suppose there is a constant $C>0$ such that $\|u_k\|_\infty\leq C$ and $\|v_k\|_\infty\to+\infty$ as $k\to+\infty$. By $(H_1)$, then $-\Delta v_k=g(u_k)\leq C$, so Harnack's inequality yields $\max v_k(0)\leq C$, a contradiction. The argument when we switch the roles of $u_k$ and $v_k$, that is $\|u_k\|_\infty\to+\infty$ and $\|v_k\|_\infty\leq C$, is identical.
    \vskip0.2truecm
    To prove Theorem \ref{Thm_Uniform_bound}, we then need to exclude the case \eqref{unbounded}. We will reason by contradiction, so we will henceforth assume that \eqref{unbounded} holds.
    
	\subsection{The scaling}

    A key step in our argument is to identify a nice limit profile of a sequence of rescaled solutions, which formally satisfies a Liouville's equation in the plane. To this aim, crucial is to find a good scaling. Motivated by the usual one in the case of a pure exponential nonlinearity in the scalar case (i.e. $\tu_k(x):=u_k(x_k+\lambda_kx)-M_k$, where $u_k(x_k)=\|u_k\|_\infty=:M_k$ for some suitably defined $\lambda_k\to0$ as $k\to+\infty$, see e.g. \cite{LiSh}), which can also be used to deal with nonlinearity with lower growth as shown in \cite{MR,Rom_N}, we look for an appropriate scaling of the form
	\begin{equation}\label{scaling_uk_vk}
		\tu_k(x):=\left(u_k(x_k+\lambda_kx)-S_k\right)A_k\qquad\mbox{and}\qquad\tv_k(x):=\left(v_k(x_k+\lambda_kx)-T_k\right)B_k\,,
	\end{equation}
	with $A_k,B_k,S_k,T_k,\lambda_k>0$ and $x_k\in\Omega$. Let us then compute the system that $(\tu_k,\tv_k)$ solves. For $x\in\Omega_k:=\lambda_k^{-1}\left(\Omega-x_k\right)$ one has
	\begin{equation*}
		\begin{split}
			-\Delta\tu_k(x)&=A_k\lambda_k^2f(v_k(x_k+\lambda_kx))=A_k\lambda_k^2f(T_k)\e^{\log(f(v_k(x_k+\lambda_kx)))-\log(f(T_k))}\\
			&=A_k\lambda_k^2f(T_k)\e^{\tslfrac{f'}f(\eta_k(x))(v_k(x_k+\lambda_kx)-T_k)}=A_k\lambda_k^2f(T_k)\e^{\frac1{B_k}\tslfrac{f'}f(\eta_k(x))\tv_k(x)}.
		\end{split}
	\end{equation*}
	Note that in the second-to-last step we used Lagrange's theorem, and linearised $\log f$ around $T_k$ as
	\begin{equation*}
		\log f(v_k(x_k+\lambda_kx))=\log f(T_k)+\frac{f'}f(\eta_k(x))(v_k(x_k+\lambda_kx)-T_k)\,,
	\end{equation*}
	where
	\begin{equation}\label{eta_k}
		\eta_k(x):=T_k+\theta_k(x)(v_k(x_k+\lambda_kx)-T_k)=T_k\left(1+\frac{\theta_k(x)}{T_kB_k}\tv_k(x)\right),\qquad\theta_k(x)\in(0,1)\,.
	\end{equation}
	Analogously for the second equation, one gets
	\begin{equation*}
		\begin{split}
			-\Delta\tv_k(x)=B_k\lambda_k^2g(S_k)\e^{\frac1{A_k}\tslfrac{g'}g(\zeta_k(x))\tu_k(x)},\qquad x\in\Omega_k\,,
		\end{split}
	\end{equation*}
	where
	\begin{equation}\label{zeta_k}
		\zeta_k(x):=S_k+\thetabar_k(x)(u_k(x_k+\lambda_kx)-S_k)=S_k\left(1+\frac{\thetabar_k(x)}{A_kS_k}\tu_k(x)\right),\qquad\thetabar_k(x)\in(0,1)\,.
	\end{equation}
	We impose that the coefficients in front of the exponential terms are identically $1$, namely
	\begin{equation}\label{lambdak}
		\lambda_k^2=\frac1{A_kf(T_k)}=\frac1{B_kg(S_k)}\,.
	\end{equation}
	We also notice that either $\slfrac{f'}f$ or $\slfrac{g'}g$ appears in the exponents. In order to exploit $(H_b)$, we also define
	\begin{equation}\label{B_kA_k}
		A_k:=\frac{f(S_k)}{f'(S_k)}\quad\mbox{and}\quad B_k:=\frac{g(T_k)}{g'(T_k)}\,.
	\end{equation}
	With these choices one reaches the system
	\begin{equation}\label{sys_k}
		\begin{cases}
			-\Delta\tu_k=\e^{\tslfrac{{f'}}f(\eta_k(x))\tslfrac{g'}g(T_k)\tv_k(x)}\quad&\mbox{in }\Omega_k\,,\\
			-\Delta\tv_k=\e^{\tslfrac{{f'}}f(S_k)\tslfrac{g'}g(\zeta_k(x))\tu_k(x)}\quad&\mbox{in }\Omega_k\,,\\
			\tu_k=\tv_k=0\quad&\mbox{on }\dOmega_k\,,
		\end{cases}
	\end{equation}
	together with the \textit{compatibility condition} which connects the parameters $S_k$ and $T_k$
	\begin{equation}\label{compatibility_condition}
		\frac{{f'(S_k)}}{f(S_k)}g(S_k)=\frac{g'(T_k)}{g(T_k)}f(T_k)\,,
	\end{equation}
	which follows by combining \eqref{lambdak} and \eqref{B_kA_k}. Note that so far we still have the freedom to choose the points $x_k$ and one of between $S_k$ and $T_k$.
	\vskip0.2truecm

	\subsection{The compatibility condition}\label{sec:compatibility} In order to better understand the relation \eqref{compatibility_condition} between $S_k$ and $T_k$, let us denote by $J:\R^+\times\R^+\to\R$ the function given by
	\begin{equation}\label{J}
		J(s,t)=\frac{f'}f(s)\,g(s)-\frac{g'}g(t)\,f(t)
	\end{equation}
	with the aim of investigating its zero set. First, note that if $f\equiv g$, then the zero-set coincides with the bisector. In the model case \eqref{NL_model}, this happens when $\alpha=1=\beta$. 
	
	Next we verify the conditions of the implicit function theorem. We compute
	\begin{equation}\label{J_monotonicity}
		\partial_tJ(s,t)=-\left(\left(\frac{g'}g\right)'\!\!(t)f(t)+\frac{g'}g(t)f'(t)\right)=-f(t)\left(\left(\frac{g'}g\right)'\!\!(t)+\frac{g'}g(t)\frac{f'}f(t)\right)<0
	\end{equation}
	independently of $s$, for $t$ large by $(H_1)$, $(H_3)$ and $(H_b)$. Then,
	\begin{equation}\label{J_t_infty}
		\lim_{t\to+\infty}J(s,t)=-\infty\quad\mbox{for all}\ \,s\ \,\mbox{fixed,}
	\end{equation}
    by $(H_1)$, and
	\begin{equation}\label{J_s_infty}
		\lim_{s\to+\infty}J(s,t)=+\infty\quad\mbox{for all}\ \,t\ \,\mbox{fixed,}
	\end{equation}
	by Lemma \ref{Lemma_f'fg}. Hence there exist $s_0,\,t_0\in\R^+$ such that $\partial_tJ(s,t)<0$ for all $t>t_0$ and $s>0$, and $J(s,t_0)>0$ for all $s>s_0$. Therefore, one may apply the implicit function theorem in $(s_0,+\infty)\times(t_0,+\infty)$ and infer the existence of the implicit function $t=t(s)$ of class $C^1$ such that    
	\begin{equation}\label{t_s_implicitfct}
		J(s,t(s))=0\qquad\mbox{for all}\ \,s>s_0
	\end{equation}
	and that
	\begin{equation}\label{t_to_infty}
		t(s)\to+\infty\qquad\mbox{as}\ \,s\to+\infty\,.
	\end{equation}
    
    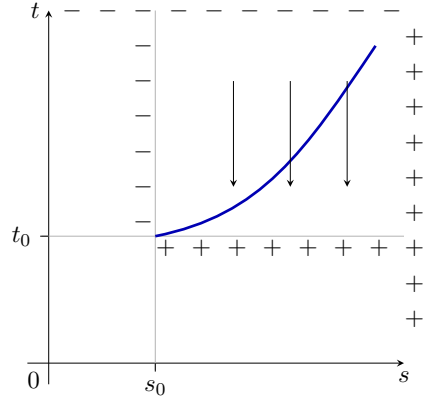
\begin{figure}[h!]\label{Disegnino_implicita}
        \centering
        \scalebox{0.85}{\begin{tikzpicture}[scale=1.1,>=stealth]
    
        % Parametri
        \def\szero{1.5}
        \def\tzero{1.8}
        \def\smax{5}
        \def\tmax{5}
        
        % Assi (con leggera estensione oltre l'origine)
        \draw[->] (-0.3,0) -- (\smax,0);
        \draw[->] (0,-0.3) -- (0,\tmax);
        
        % Etichette assi
        \node[below] at (\smax,0) {$s$};
        \node[left]  at (0,\tmax) {$t$};
        
        % Origine
        \node[below left] at (0,0) {$0$};
        
        % Linea verticale sottile: s = s0
        \draw[gray!60] (\szero,-0.1) -- (\szero,\tmax);
        
        % Linea orizzontale sottile: t = t0
        \draw[gray!60] (-0.1,\tzero) -- (\smax,\tzero);
        
        % Tacche ed etichette
        \draw (\szero,0) -- (\szero,-0.12) node[below] {$s_0$};
        \draw (0,\tzero) -- (-0.12,\tzero) node[left] {$t_0$};
        
        % Curva crescente
        \draw[line width=1.2pt, blue!70!black](\szero,\tzero)
        .. controls (3.0,2.1) and (3.6,3.0)
        .. (4.6,4.5);
        
        % Frecce verticali (stessa altezza e stessa lunghezza)
        \foreach \x in {2.6,3.4,4.2} {
        \draw[->] (\x,4) -- (\x,2.5);
        }
        
        % Segni "+" sul bordo destro (step 0.5)
        \foreach \y in {0.8,1.3,...,4.8} {
          \node at (\smax+0.15,\y-0.17) {$+$};
        }
        
        % Segni "-" in alto (step 0.5)
        \foreach \x in {0.5,1,...,4.5,5} {
          \node at (\x-0.17,\tmax) {$-$};
        }
    
        % Segni "-" a sinistra (step 0.5)
        \foreach \y in {2,2.5,...,4.5} {
          \node at (\szero-0.17,\y) {$-$};
        }

        % Segni "+" in basso (step 0.5)
        \foreach \x in {1.5,2,2.5,...,4.5} {
          \node at (\x+0.15,\tzero-0.17) {$+$};
        }
        
        \end{tikzpicture}}
        \caption{A qualitative sketch of the implicit function $t=t(s)$ in blue colour; the arrows indicate the growth of the function $J$ from negative to positive.}
    \end{figure}
    
    Indeed, if by contradiction for a sequence $s_j\to+\infty$ there holds $t_j:=t(s_j)\leq C$ for some positive constant $C$, then 
    $$0=J(s_j,t(s_j))=\frac{{f'(s_j)}}{f(s_j)}g(s_j)-\frac{g'(t_j)}{g(t_j)}f(t_j)\to+\infty$$
    by Lemma \ref{Lemma_f'fg} and $(H_1)$, a contradiction.
	Moreover, we know that
	\begin{equation*}
		\begin{split}
			t'(s)&=-\frac{\partial_sJ(s,t(s))}{\partial_tJ(s,t(s))}=\frac{\left(\frac{f'}f\right)'\!\!(s)g(s)+\frac{f'}f(s)g'(s)}{\left(\frac{g'}g\right)'\!\!(t(s))f(t(s))+\frac{g'}g(t(s))f'(t(s))}\\
			&=\frac{g(s)}{f(t(s))}\cdot\frac{\left(\frac{f'}f\right)'\!\!(s)+\left(\frac{f'}f\frac{g'}g\right)(s)}{\left(\frac{g'}g\right)'\!\!(t(s))+\left(\frac{f'}f\frac{g'}g\right)(t(s))}.
		\end{split}
	\end{equation*}
	Note that by \eqref{t_to_infty}, $(H_3)$, Lemma \ref{f3_conseq}(i), and $(H_b)$, the second fraction converges to $1$ as $s\to+\infty$ since $b>0$. Hence,
	\begin{equation}\label{ft'g}
		f(t(s))t'(s)=g(s)(1+o(1))\,,
	\end{equation}
	which integrated implies that for any $\varepsilon>0$ there exists $s_1(\varepsilon)>s_0$ so that for all $s>s_1(\varepsilon)$ then
	\begin{equation*}
		F(t(s))-F(t(s_1))\begin{cases}
			<(1+\varepsilon)\left(G(s)-G(s_1)\right)\\
			>(1-\varepsilon)\left(G(s)-G(s_1)\right)\,.
		\end{cases}
	\end{equation*}
	where $F(t):=\int_0^tf(\tau)\dd\tau$ and $G(t):=\int_0^tg(\tau)\dd\tau$. Noticing that $F$ is invertible by $(H_1)$, we deduce an estimate on the behaviour of the implicit function at $\infty$, namely, up to a different $\varepsilon$, that
	\begin{equation}\label{t_F-1G}
		F^{-1}\left((1-\varepsilon)G(s)\right)<t(s)<F^{-1}\left((1+\varepsilon)G(s)\right), \qquad \text{for $s$ large}.
	\end{equation}
    Actually, in case \eqref{nonBM} holds, or \eqref{BM} is satisfied with $p<q$, one can improve this by proving that 
    \begin{equation}\label{t_F-1G_new}
        (1-\varepsilon)F^{-1}\left(G(s)\right)<t(s)<(1+\varepsilon)F^{-1}\left(G(s)\right)
    \end{equation}
    for $s$ large. To this end, define $\gamma(s):=F^{-1}(G(s))$, and we show that
    \begin{equation}\label{gamma_s}
        \gamma'(s)=\frac{g(s)}{f(\gamma(s))}\quad\mbox{and}\quad \gamma(s)\geq s\ \ \mbox{for}\ s\ \mbox{large},
    \end{equation}
    so $\gamma(s)\to+\infty$ as $s\to+\infty$. The first is a direct computation, while the second follows by the fact that, if \eqref{nonBM} holds or \eqref{BM} is satisfied with $p<q$, then $g(s)>f(s)$ for large $s$ by Lemma \ref{f<g}, which clearly implies $G(s)\geq F(s)$. Note, instead, that this is not anymore true when $p=q$, as the choice of $f(t)=\e^{t+\varphi(t)}$ e $g(t)=\e^{t-\varphi(t)}$ with $\varphi(t)=\frac{\sin t}{1+t}$ shows.
    
    Letting $\varepsilon\in(0,1)$ and using $(H_b)$, $(H_3)$, $(H_2)$ and \eqref{gamma_s}, for large $s$ one has
    $$\begin{aligned}
    \frac{\dd}{\dd s}J(s,(1+\varepsilon)\gamma(s))&=\partial_sJ(s,(1+\varepsilon)\gamma(s))+(1+\varepsilon)\gamma'(s)\partial_tJ(s, (1+\varepsilon)\gamma(s))\\&=g(s)(b+o(1))-(1+\varepsilon)\gamma'(s)f((1+\varepsilon)\gamma(s))(b+o(1))\\&\le g(s)(b+o(1))-(1+\varepsilon)\gamma'(s)f(\gamma(s))(b+o(1))\\
    &=g(s)(-\varepsilon b+o(1))\to-\infty
    \end{aligned}$$
    as $s\to+\infty$. This implies that $J(s,(1+\varepsilon)\gamma(s))<0$ which, together with the fact that $t(s)$ is the unique zero of \eqref{J}, then $t(s)<(1+\varepsilon)\gamma(s)$ for $s$ large. Analogously, one infers $t(s)>(1-\varepsilon)\gamma(s)$, and \eqref{t_F-1G_new} is proved. Combining \eqref{t_F-1G_new} and \eqref{gamma_s} yields
	\begin{equation*}
		t(s)\geq(1-\varepsilon)s
	\end{equation*}
	for $s$ large enough. Under \eqref{nonBM} we can also improve this lower bound.
    
	\begin{lem}\label{t>s_Lemma}
		Under \eqref{nonBM}, $(H_1)$-$(H_3)$ and $(H_b)$, if $t=t(s)$ is given by \eqref{t_s_implicitfct}, then 
		$$\displaystyle\lim_{s\to+\infty}\frac{t(s)}s=+\infty\,.$$
		In particular, $t(s)\geq s$ for large $s$. Moreover,
		\begin{equation}\label{t>s_conseq}
			\log F(t(s))\cdot\frac{f'}f(s)\to+\infty\quad\mbox{as}\quad s\to+\infty\,.
		\end{equation}
	\end{lem}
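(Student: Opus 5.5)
The plan is to compare $t(s)$ with $\gamma(s)=F^{-1}(G(s))$ and then prove that $\gamma(s)/s\to+\infty$ by elementary integral comparisons. Both inputs come from \eqref{nonBM}: the upper bound $\log f(\tau)=o(\tau)$ and the lower bound $\log g(\tau)/\tau\to+\infty$, as in the proof of Lemma \ref{f<g}. The second claim \eqref{t>s_conseq} will then follow from the two-sided estimate \eqref{t_F-1G} combined with Lemma \ref{f3_conseq}(ii).

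\emph{Step 1 (growth of $\gamma$).} Fix $M>1$. I will show that $G(s)>F(Ms)$ for $s$ large, which gives $\gamma(s)>Ms$ because $F$ is increasing.
\begin{itemize}
\item[(a)] Since $\log f(\tau)/\tau\to0$, for any $\delta>0$ we have $f(\tau)\le C_\delta\e^{\delta\tau}$ for all $\tau>0$. Hence $F(Ms)\le C_\delta Ms\,\e^{\delta Ms}$.
\item[(b)] By $(H_2)$, $g$ is increasing for large arguments. Since $\log g(\tau)\ge K\tau$ for $\tau$ large, with $K$ arbitrary, we get $G(s)\ge\int_{s/2}^s g\ge\frac s2\,g(s/2)\ge\frac s2\,\e^{Ks/2}$ for $s$ large.
\item[(c)] Choosing $\delta=1/M$ and $K=4$ gives $F(Ms)\le C\,Ms\,\e^{s}<\frac s2\,\e^{2s}\le G(s)$ for large $s$.
\end{itemize}
Hence $\gamma(s)/s\to+\infty$.

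\emph{Step 2 (growth of $t$).} By \eqref{t_F-1G_new}, which is available under \eqref{nonBM}, we have $t(s)>(1-\varepsilon)\gamma(s)$ for large $s$. Together with Step 1 this gives $t(s)/s\to+\infty$, and in particular $t(s)\ge s$ eventually. As a fallback that avoids \eqref{t_F-1G_new}, one can instead use the left inequality of \eqref{t_F-1G}, $t(s)>F^{-1}((1-\varepsilon)G(s))$: the factor $1-\varepsilon$ only changes constants in (c).

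\emph{Step 3 (proof of \eqref{t>s_conseq}).} The integrated form of \eqref{ft'g} gives $F(t(s))\ge(1-\varepsilon)G(s)-C$. By the bound in Step 1(b), $\log G(s)\ge\frac K2 s+\log\frac s2$ for every fixed $K$ and $s$ large, so $\log F(t(s))\ge s$ for large $s$. Therefore
\begin{equation*}
\log F(t(s))\cdot\frac{f'}f(s)\ \ge\ s\,\frac{f'}f(s)\to+\infty
\end{equation*}
by Lemma \ref{f3_conseq}(ii).

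I expect Step 1 to be the only place needing care, namely matching the subexponential upper bound on $F$ against the superexponential lower bound on $G$ at the rescaled argument $Ms$. The monotonicity of $g$ from $(H_2)$ is needed in Step 1(b), and it makes the comparison routine; the remaining steps are direct consequences of results already established.
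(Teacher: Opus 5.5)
Your proof is correct, but for the main claim $t(s)/s\to+\infty$ it takes a different route from the paper.

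\textbf{Paper's route.} The paper argues directly on $t$. By de l'H\^opital and \eqref{ft'g}, $\lim t(s)/s=\lim t'(s)=\lim g(s)/f(t(s))$. A second application of de l'H\^opital, again substituting $t'$ via \eqref{ft'g}, reduces this to $\frac{g'}{g}(s)\,\frac{f}{f'}(t(s))\,(1+o(1))$, which diverges by \eqref{nonBM}.

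\textbf{Your route.} You pass through $\gamma=F^{-1}\circ G$ and the already established bound \eqref{t_F-1G_new}, or alternatively \eqref{t_F-1G}. You then show $\gamma(s)/s\to+\infty$ by comparing the subexponential quantity $F(Ms)$ with the superexponential $G(s)$.

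\textbf{Comparison.} The paper's argument is shorter and works with the pointwise ratios $f'/f$ and $g'/g$ directly. Yours uses only integrated information. Your fallback via \eqref{t_F-1G} needs nothing beyond the integrated form of \eqref{ft'g} and the growth rates $\log f(\tau)=o(\tau)$ and $\log g(\tau)/\tau\to+\infty$. It also avoids the chained de l'H\^opital, whose validity rests on the last limit existing, and it is quantitative: you get $\gamma(s)>Ms$ explicitly. One small point: the monotonicity of $g$ in Step 1(b) is not actually needed, since $\log g(\tau)\ge K\tau$ on all of $[s/2,s]$ already gives $\int_{s/2}^s g\ge \frac s2\,\e^{Ks/2}$.

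Your Step 3 matches the paper's proof of \eqref{t>s_conseq}. The only difference is that you obtain $\log G(s)\ge s$ from Step 1(b) rather than from Lemma \ref{f<g}. This is marginally cleaner, because $g(s)>\e^{s}$ alone only gives $G(s)\ge \e^{s}-C$, although that weaker bound is harmless for the conclusion.
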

	\begin{proof}
		By de l'H\^opital's rule, \eqref{ft'g}, and \eqref{nonBM} one computes
    	\begin{equation*}
	        \begin{split}
	            \lim_{s\to+\infty}\frac{t(s)}s&=\lim_{s\to+\infty}t'(s)=\lim_{s\to+\infty}\frac{g(s)}{f(t(s))}=\lim_{s\to+\infty}\frac{g'(s)}{f'(t(s))t'(s)}\\
	            &=\lim_{s\to+\infty}\frac{g'}g(s)\cdot\frac f{f'}(t(s))(1+o(1))=+\infty\,.
	        \end{split}
    	\end{equation*}

        The second conclusion is immediate from \eqref{t_F-1G} and Lemma \ref{f3_conseq}(ii). Indeed, fixing $\varepsilon>0$, for $s$ large, by \eqref{nonBM}, one has
        $$\log F(t(s))\cdot\frac{f'}f(s)\geq\log((1-\varepsilon)G(s))\cdot\frac{f'}f(s)\geq\log(1-\varepsilon)\cdot\frac{f'}f(s)+s\frac{f'}f(s)\to+\infty\,,$$
        having used Lemma \ref{f<g} to have $\log G(s)>s$ for $s$ large.
    \end{proof}

    \begin{lem}\label{log_contradiction}
        In addition to the assumptions of Lemma \ref{t>s_Lemma}, suppose that either $(H_6)$ or $(H_6')$ hold. Then, there exists $\varepsilon>0$ and $s_0>0$ such that
        \begin{equation}\label{log_contradiction_new_step1}
            \frac{t(s)\slfrac{f'}f(t(s))}{\left(\log\slfrac f{f'}(s)\right)^2}>\varepsilon>0\,.
        \end{equation}
        for all $s>s_0$.
    \end{lem}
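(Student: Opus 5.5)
The plan is to bound the numerator from below using $(H_6)$ or $(H_6')$ evaluated at $t=t(s)$, and to control the denominator by exploiting the behaviour of $\slfrac f{f'}$ at infinity under \eqref{nonBM}. In fact I expect to get the stronger conclusion that the quotient in \eqref{log_contradiction_new_step1} is eventually $\geq a$, so any $\varepsilon\in(0,a)$ will work.

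Preliminarily, under \eqref{nonBM} we have $\slfrac f{f'}(s)\to+\infty$, so $\log\slfrac f{f'}(s)>0$ for large $s$. Lemma \ref{f3_conseq}(ii) gives $s\,\slfrac{f'}f(s)\to+\infty$, that is $\slfrac f{f'}(s)\leq s$ for $s$ large. Hence
$$0<\log\frac f{f'}(s)\leq\log s\qquad\mbox{for $s$ large.}$$

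\emph{Case $(H_6)$.} Since $t(s)\to+\infty$ by \eqref{t_to_infty}, $(H_6)$ applies at $t(s)$ and gives $t(s)\,\slfrac{f'}f(t(s))\geq a(\log t(s))^2$. By Lemma \ref{t>s_Lemma}, $t(s)\geq s$ for large $s$, so $\log t(s)\geq\log s\geq\log\slfrac f{f'}(s)>0$. Squaring and dividing shows that the quotient is at least $a$.

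\emph{Case $(H_6')$.} Applying $(H_6')$ at $t(s)$ gives $t(s)\,\slfrac{f'}f(t(s))\geq a\big(\loglog F(t(s))\big)^2$. Here I would use \eqref{t>s_conseq} from Lemma \ref{t>s_Lemma}: since $\log F(t(s))\cdot\slfrac{f'}f(s)\to+\infty$, we have $\log F(t(s))\geq\slfrac f{f'}(s)$ for $s$ large. Taking logarithms, and using that $\slfrac f{f'}(s)>1$ eventually, yields $\loglog F(t(s))\geq\log\slfrac f{f'}(s)>0$. Again the quotient is at least $a$.

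The only delicate point is in the $(H_6')$ case: $\loglog F$ must be compared with $\log\slfrac f{f'}(s)$ rather than with $\log s$. This is exactly what \eqref{t>s_conseq} provides, so the main work was already done in Lemma \ref{t>s_Lemma}. What remains is to check positivity of the logarithms so that squaring preserves the inequalities, which holds for $s$ large by the observations above.
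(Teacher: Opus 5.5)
Your argument is correct and is essentially the paper's proof: under $(H_6)$ both arguments evaluate the hypothesis at $t(s)$ and combine it with $t(s)\ge s$ and $s\,(f'/f)(s)\to+\infty$, and under $(H_6')$ both rely on \eqref{t>s_conseq}. The difference is only cosmetic: the paper rewrites the claim in exponentiated form, $\e^{\sqrt{\frac1\varepsilon t(s)(f'/f)(t(s))}}\,(f'/f)(s)>1$ with $\varepsilon=1/a$, whereas you compare the logarithms directly and obtain the slightly better constant $a$.
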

	\begin{proof}
	    By simple algebraic manipulations, \eqref{log_contradiction_new_step1} amounts to showing that
        \begin{equation}\label{log_contradiction_new_step2}
            \e^{\sqrt{\frac1\varepsilon t(s)\tslfrac{f'}f(t(s))}}\,\frac{f'}f(s)>1\,.
        \end{equation}
        In case $(H_6)$ is fulfilled, taking $\varepsilon=\frac1a$ and recalling Lemma \ref{t>s_Lemma}, then
        \begin{equation*}
            \e^{\sqrt{\frac1\varepsilon t(s)\tslfrac{f'}f(t(s))}}\,\frac{f'}f(s)>t(s)\frac{f'}f(s)>s\,\frac{f'}f(s)\to+\infty
        \end{equation*}
        by Lemma \ref{f3_conseq}(ii), which proves \eqref{log_contradiction_new_step1}. On the other hand, under $(H_6')$, again taking $\varepsilon=\frac1a$,
        \begin{equation*}
            \e^{\sqrt{\frac1\varepsilon t(s)\tslfrac{f'}f(t(s))}}\,\frac{f'}f(s)>\log F(t(s))\cdot\frac{f'}f(s)\to+\infty
        \end{equation*}
        by Lemma \ref{t>s_Lemma}.
	\end{proof}

	\begin{remark}
	    Let us give some examples of behaviours at $\infty$ of the implicit function $t=t(s)$ found above, according to model choices of our nonlinearities.
		\begin{enumerate}
			\item[i)] Let $f(t)=\alpha t^{\alpha-1}\e^{t^\alpha}$ and $g(t)=\beta t^{\beta-1}\e^{t^\beta}$, which satisfy our assumptions, and in particular $(H_b)$ with $b=\alpha\beta$ when $\alpha,\beta\in(0,2)$ and $\alpha+\beta=2$. Then $F(t)=\e^{t^\alpha}$ and $G(t)=\e^{t^\beta}$, therefore $F^{-1}(t)=\left(\log t\right)^\frac1\alpha$. A simple computation gives
            \begin{equation*}
				F^{-1}\left(G(s)\right)=\left(\log\left(\e^{s^\beta}\right)\right)^\frac1\alpha=s^\frac\beta\alpha,
			\end{equation*}
			therefore, \eqref{t_F-1G_new} yields
			$$(1-\varepsilon)s^\frac\beta\alpha<t(s)<(1+\varepsilon)s^\frac\beta\alpha.$$
			Actually, arguing instead with \eqref{t_F-1G}, we can have a better asymptotic: indeed, by \eqref{t_F-1G} one gets
            $$(s^\beta-\varepsilon)^\frac1\alpha<t(s)<(s^\beta+\varepsilon)^\frac1\alpha.$$
            and
			$$(s^\beta-\varepsilon)^\frac1\alpha=s^{\frac\beta\alpha}\left(1-\frac\varepsilon{s^\beta}\right)^\frac1\alpha=s^\frac\beta\alpha\left(1-\frac1\alpha\,\frac\varepsilon{s^\beta}+o(\varepsilon)\right)=s^\frac\beta\alpha-\frac\varepsilon\alpha s^{\frac\beta\alpha(1-\alpha)}+o(\varepsilon)\,,$$
			and similarly for the right-hand side, so we also find  the second term of the asymptotic expansion.
			\item[ii)] If $f,g$ are such that $F(t)=\e^{\left(\log t\right)^p}$ and $G(t)=\e^{\frac{t^2}{(\log t)^q}}$ with $q=p+1$ in order to satisfy $(H_b)$ with $b=2p$, with a similar argument one shows that the leading term of the map $s\mapsto t(s)$ is $\e^{\left(s^2\left(\log s\right)^{-q}\right)^\frac1p}$.
		\end{enumerate}
	\end{remark}
	
	\section{The blow-up argument: Proof of Theorem \ref{Thm_Uniform_bound}}\label{Sec_blowup}
	
	By the radial symmetry, it is immediate to see that the only blow-up point for $(u_k)_k$ and $(v_k)_k$ is the origin. Indeed, if $\bar{x}\neq0$, $\bar{x}\in B_1(0)$, such that $\exists\bar{x}_k \to \bar{x}$ with $u_k(\bar{x}_k)\to+\infty$ or $v_k(\bar{x}_k) \to +\infty$, since both $u_k,v_k$ are radially decreasing about the origin, then $u_k\to+\infty$ or $v_k\to+\infty$, resp., uniformly on the whole ball $B_{|\bar{x}|}(0)$. This clearly contradicts the uniform $L^1$ bounds in \eqref{Lambda}, since $f$ and $g$ are increasing by $(H_2)$. In light of this, the natural choice of the scaling parameters in \eqref{scaling_uk_vk} is $x_k=0$ and
	$$S_k:=\|u_k\|_\infty=u_k(0)\to+\infty\,.$$
	Then $T_k=t(S_k)$ where $t=t(s)$ is the implicit function defined by \eqref{t_s_implicitfct}, and the parameters $A_k$, $B_k$, and $\lambda_k$ are consequently fixed by \eqref{B_kA_k} and \eqref{lambdak}. Note that 
	$$T_k\to+\infty\quad\mbox{and}\quad\lambda_k\to0$$
	by \eqref{t_to_infty} and \eqref{lambdak}, respectively, under $(H_1)$ in both settings \eqref{BM} and \eqref{nonBM}. By the choice of $S_k$ one immediately has
    $$\tu_k\leq0\ \,\mbox{in}\ \,\Omega_k\quad\mbox{and}\quad\tu_k(0)=0\,.$$
    Let us also define
	\begin{equation}\label{bvk}
		\bv_k(x):=\tv_k(x)-\tv_k(0)\leq0
	\end{equation}
	since $0$ is the maximum point of $\tv_k$, so $\bv_k(0)=0$ trivially. We have
	\begin{equation}\label{eq_bvk}
		-\Delta\bv_k(x)=-\Delta\tv_k(x)=\e^{\tslfrac{f'}f(S_k)\tslfrac{g'}g(\zeta_k(x))\,\tu_k(x)},\quad x\in\Omega_k\,.
	\end{equation}
    Hence by $(H_2)$ one infers $-\Delta\bv_k\leq1$ in $\Omega_k$. Therefore one may apply the Harnack inequality in Lemma \ref{Harnack_Serrin} with $\tw_k:=-\bv_k\geq0$ in $\Omega_k$ and get
	$$\max_{B_R(0)}|\bv_k|=\max_{B_R(0)}\tw_k\leq C_R\left(\min_{B_R(0)}\tw_k+K(\|\Delta\tw_k\|_\infty,R)\right)\leq C_R\,.$$
	By standard elliptic regularity estimates, this implies
	\begin{equation}\label{5.1bis}
		\begin{split}
			\|\bv_k\|_{H^2(B_{R/2}(0))}&\leq C_R\left(\|\bv_k\|_{L^2(B_R(0))}+\|\Delta\tv_k\|_{L^2(B_R(0))}\right)\\
			&\leq C_R\left(\|\bv_k\|_{L^\infty(B_R(0))}+\|\Delta\bv_k\|_{L^\infty(B_R(0))}\right)\leq C_R\,.
		\end{split}
	\end{equation}
	Hence, by compact embedding, there exists $\bv\in H^2_{loc}(\R^2)$ such that 
	\begin{equation}\label{bvk_to_bv}
		\bv_k\to\bv\quad\mbox{uniformly on compact sets of $\R^2$}
	\end{equation}
	(which from now on will be shortened by the acronym \textit{u.c.s.}) and in turn
	\begin{equation}\label{tvk_ucs}
		\tv_k(x)=\tv_k(0)+\bv_k(x)=\tv_k(0)+\bv(x)+o_k(1)\quad\mbox{u.c.s.}.
	\end{equation}
	A comparable local compactness result cannot be obtained for $\tu_k$, since the behaviour of the right-hand side of 
	\begin{equation*}
		-\Delta\tu_k(x)=\e^{\tslfrac{f'}f(\eta_k(x))\tslfrac{g'}g(T_k)\,\tv_k(x)}
	\end{equation*}
	does depend on the behaviour of $\tv_k$. Hence, in light of \eqref{tvk_ucs}, (possibly passing to a subsequence), we distinguish three cases according to the behaviour of $\tv_k(0)\to\ell\in\{-\infty,\,c\in\R,\,+\infty\}$ as $k\to+\infty$.
	
	\subsection{Case \texorpdfstring{$\boldsymbol{\tv_k(0)\to-\infty}$}{1}}
	In this setting, by \eqref{tvk_ucs} one also has $\tv_k\to-\infty$ u.c.s., hence $-\Delta\tu_k\in[0,1]$ u.c.s..	Recalling $\tu_k(0)=0$ and $\tu_k\leq0$, as before the Harnack inequality and the local elliptic regularity theory yield the existence of $\tu\in H^2_{loc}(\R^2)$ such that 
	\begin{equation}\label{ukti}
		\tu_k\to\tu\qquad\quad\mbox{u.c.s.}.
	\end{equation}
	Therefore, by Lemma \ref{f3_conseq} (iii),
	\begin{equation}\label{zeta-Sk}
		\zeta_k(x)=S_k\Big(1+\underbrace{\thetabar_k(x)}_{\in[0,1]}\underbrace{\frac{f'(S_k)}{f(S_k)S_k}}_{\to0}\underbrace{\tu_k}_{\to\tu}\Big)\sim S_k\quad\mbox{u.c.s.}.
	\end{equation}
	Moreover, we claim that 
	\begin{equation}\label{skzk}
		\frac{f'}f(S_k)\frac{g'}g(\zeta_k(x))\sim\frac{f'(S_k)}{f(S_k)}\frac{g'(S_k)}{g(S_k)}\to b\,\qquad \text{u.c.s.}
	\end{equation}
	since $S_k\to+\infty$. This is clear if \eqref{BM} is assumed, while in the case \eqref{nonBM} one argues as follows. First, we have
	\begin{equation}\label{zk-sk}
		\zeta_k(x)-S_k=\underbrace{\thetabar_k(x)}_{\in[0,1]}\underbrace{\frac{f'(S_k)}{f(S_k)}}_{\to0}\underbrace{\tu_k}_{\to\tu}\to0\quad\mbox{u.c.s.},
	\end{equation}
	that is, $\zeta_k\to+\infty$ u.c.s.. Hence
	\begin{equation*}\label{proof_asympt_g}
		\frac{g'}g(S_k)-\frac{g'}g(\zeta_k(x))=\int_{\zeta_k(x)}^{S_k}\left(\frac{g'}g\right)'(\xi)\dd\xi\leq\sup_{[\zeta_k(x),\,S_k]}\left(\frac{g'}g\right)'\cdot\left(S_k-\zeta_k(x)\right)=o_k(1)
	\end{equation*}
	u.c.s., by \eqref{zk-sk}, $(H_3)$ and $(H_4)$. Therefore $\slfrac{g'}g(S_k)\to+\infty$ yields $\slfrac{g'}g(\zeta_k)\sim\slfrac{g'}g(S_k)$ u.c.s. and in turn \eqref{skzk}.
	From this, \eqref{ukti}, \eqref{skzk}, \eqref{zeta_k}-\eqref{B_kA_k}, and \eqref{Lambda}, after using Fatou's Lemma, one obtains
	\begin{equation}\label{utildacon}
		\begin{split}
			0<\int_{\R^2}\e^{b\tu}\dd x&\leq\liminf_{k\to+\infty}\int_{\Omega_k}\e^{\tslfrac{f'}f(S_k)\tslfrac{g'}g(\zeta_k(x))\tu_k(x)}\dd x =\liminf_{k\to+\infty}\int_{\Omega_k}\frac{g(u_k(x_k+\lambda_k x))}{g(S_k)}\dd x\\
			&=\liminf_{k\to+\infty}B_k\int_{\Omega_k}g(u_k(x_k+\lambda_k x))\lambda_k^2\dd x=\liminf_{k\to+\infty}\frac g{g'}(T_k)\intOmega g(u_k)\dd x\\
			&\leq\Lambda\lim_{k\to+\infty}\frac g{g'}(T_k)\,.
		\end{split}
	\end{equation}
	The contradiction is immediately reached in the \textit{non-Liouville} case \eqref{nonBM} since $\slfrac{g'}g(T_k)\to+\infty$. On the other hand, if \eqref{BM} is assumed, \eqref{utildacon} yields
	\begin{equation}\label{utildacon_BM}
		0<\int_{\R^2}\e^{b\tu}\dd x\leq\frac\Lambda q\,.
	\end{equation}
	In this case, reasoning similarly as in \eqref{5.1bis}, for a fixed $R>0$ we obtain that $\|\tu_k\|_{W^{2,\bar q}(B_R(0))}\leq C_R$ for every $\bar q\in(0,1)$, hence $\tu_k\to\tu$ in $C^{1,\alpha}_{loc}(\R^2)$ for all $\alpha\in(0,1)$. Moreover, by \eqref{scaling_uk_vk} with our choices $x_k=0$ and \eqref{B_kA_k}, we see that
    \begin{equation*}
        \tv_k(x)-\tv_k(0)=\frac g{g'}(T_k)\left(v_k(\lambda_kx)-v_k(0)\right),
    \end{equation*}
    which, together with \eqref{bvk_to_bv} and \eqref{BM}, implies
    \begin{equation*}
        v_k(\lambda_kx)-v_k(0)\to q\,\bv(x)\qquad\mbox{u.c.s..}
    \end{equation*}
    Since $v_k(0)=\|v_k\|_\infty\to+\infty$, we infer that $v_k(\lambda_kx)\to+\infty$ u.c.s., and in turn that $$\eta_k(x)=\theta_k(x)v_k(\lambda_kx)+(1-\theta_k(x))T_k\to+\infty\quad\ \mbox{u.c.s.}\,.$$
    Hence $\slfrac{f'}f(\eta_k)\to p$ u.c.s. by \eqref{BM} and so
    \begin{equation*}
		-\Delta\tu_k=\e^{\tslfrac{f'}f(\eta_k(x))\tslfrac{g'}g(T_k)\,\tv_k}\to0\quad\mbox{u.c.s.}\,.
	\end{equation*}
    Since $\nabla\tu_k\to\nabla\tu$ u.c.s., this implies that $\tu$ is weakly harmonic in $\R^2$. Moreover, $\tu(0)=\lim_{k\to+\infty}\tu_k(0)=0$ and $\tu\leq0$, therefore, Liouville's theorem \cite{SZ} implies $\tu\equiv0$ in $\R^2$, which contradicts \eqref{utildacon_BM}. We can therefore exclude that $\tv_k(0)\to-\infty$ u.c.s. and proceed with the other cases. 
	
	\subsection{Case \texorpdfstring{$\boldsymbol{\tv_k(0)\to c\in\R}$}{2}}
	By \eqref{bvk_to_bv} $\tv_k$ is bounded u.c.s. and \eqref{tvk_ucs} holds. From Lemma \ref{f3_conseq}(iii), we then have
	\begin{equation*}
            \eta_k(x)=T_k\Big(1+\underbrace{\theta_k(x)}_{\in[0,1]}\underbrace{\frac{g'(T_k)}{g(T_k)T_k}}_{\to0}\underbrace{\tv_k}_{\leq C}\Big)\sim T_k \quad\mbox{u.c.s.}
	\end{equation*}
	as $k\to+\infty$, and moreover,
		\begin{equation}\label{etak-Tk}
			\eta_k(x)-T_k=\theta_k(x)\frac{g'}g(T_k)\tv_k(x)\,.
		\end{equation}
		In order to show that $\slfrac{f'}f(\eta_k)\sim\slfrac{f'}f(T_k)$ u.c.s. as $k\to+\infty$, it is equivalent to show that $\slfrac f{f'}(\eta_k)\sim\slfrac f{f'}(T_k)$ u.c.s.. By $(H_3)$, \eqref{etak-Tk}, $(H_b)$, and \eqref{tvk_ucs}, one has
		\begin{equation}\label{proof_asympt_f}
			\begin{split}
				\left|\frac{\slfrac f{f'}(\eta_k(x))-\slfrac f{f'}(T_k)}{\slfrac f{f'}(T_k)}\right|&=\frac {f'}f(T_k)\left|\mathlarger\int_{T_k}^{\eta_k(x)}\left(\slfrac f{f'}\right)'\!(\xi)\dd\xi\right|\\
                &=\left(\sup_{[T_k,\eta_k(x)]}\left(\frac f{f'}\right)'\right)
				\theta_k(x)\frac{f'}f(T_k)\frac{g'}g(T_k)|\tv_k(x)|\\
				&=o_k(1)(b+o_k(1))(\tv_k(0) +\bar v(x)+o_k(1))=o_k(1)\,.
			\end{split}
		\end{equation}
		Therefore
	\begin{equation*}
		-\Delta\tu_k(x)=\e^{\tslfrac{f'}{f}(T_k)\left(1+o_k(1)\right)\tslfrac{g'}{g}(T_k)\,\tv_k(x)}\leq C
	\end{equation*}
	which implies \eqref{ukti} and, in turn, \eqref{utildacon} follows as in the previous section. Therefore, in the \textit{non-Liouville} case \eqref{nonBM} we get a contradiction. On the other hand, in the \textit{Liouville case} \eqref{BM}, we only get \eqref{utildacon_BM}. As in the previous section, $-\Delta\tu_k\to-\Delta\tu$, hence $-\Delta\tu=\e^{b\tv}$ in $\R^2$. The same analysis can be carried out for the other equation in \eqref{sys_k}, so the limit $(\tu,\tv)$ satisfies
	\begin{equation}\label{sys_limit}
		\begin{cases}
			-\Delta\tu=\e^{b\tv}\quad&\mbox{in }\R^2\,,\\
			-\Delta\tv=\e^{b\tu}\quad&\mbox{in }\R^2\,,
		\end{cases}\qquad\mbox{with}\qquad\begin{cases}
			\int_{\R^2}\e^{b\tu}\dd x\leq\frac\Lambda q\,,\\
			\int_{\R^2}\e^{b\tv}\dd x\leq\frac\Lambda p\,.\\
		\end{cases}
	\end{equation}
    Using the change of variables $U(x):=b\tu(\frac x{\sqrt b})$ and $V(x):=b\tv(\frac x{\sqrt b})$ it is easy to realise that $(U,V)$ satisfies \eqref{sys_limit} with $b=1$. Being now in the setting of \cite{CK}, we can reduce the limit system to the scalar Liouville's equation. In fact, \cite[Theorem 1.3]{CK} implies that $\int_{\R^2}\e^U=8\pi=\int_{\R^2}\e^V$ and $U=V$ is the solution of 
	\begin{equation*}
		-\Delta U=\e^U\quad\mbox{in }\R^2
	\end{equation*}
	given by
	\begin{equation*}
		U(x)=2\log\left(\frac{2\sqrt2\lambda}{1+\lambda^2|x|^2}\right)
	\end{equation*}
	for some $\lambda>0$. Hence, coming back to $\tu$ and $\tv$, we infer 
    \begin{equation*}
        \tu(x)=\tv(x)=\frac2b\log\left(\frac{2\sqrt2\lambda}{1+\lambda^2b|x|^2}\right)
    \end{equation*}
    and, with similar computations as in \eqref{utildacon},
	\begin{equation*}
		\frac{8\pi}b=\int_{\R^2}\e^{b\tu}\dd x=\lim_{R\to+\infty}\int_{B_R(0)}\e^{b\tu}\dd x\leq\lim_{R\to+\infty}\liminf_{k\to+\infty}\frac g{g'}(T_k)\int_{B_{R\lambda_k}(0)}g(u_k(x))\dd x\,,
	\end{equation*}
	hence, by \eqref{BM}, one infers
    \begin{equation}\label{conc8pi}
        \lim_{R\to+\infty}\liminf_{k\to+\infty}\int_{B_{R\lambda_k}(0)}g(u_k(x))\dd x\geq\frac{8\pi}p\,.
    \end{equation}
    Since we have already observed that the only blow-up point for $(u_k)_k$ and $(v_k)_k$ is the origin, then $(u_k)_k, (v_k)_k$ are locally uniformly bounded in $\overline{B_1(0)} \setminus \{0\}$. Hence there exist $u,v$ s.t. $u_k\to u$ and $v_k\to v$ in $L^\tau_{loc}(\overline{B_1(0)} \setminus \{0\})$, $\tau>1$, and moreover, $(g_k):=(g(u_k))_k$ and $(f_k):=(f(v_k))_k$ are uniformly bounded in $L^1(B_1(0))$ by \eqref{Lambda}. Hence there exist $\mu,\nu\in\mathcal{M}(B_1(0))\cap L^\infty(\overline{B_1(0)}\setminus\{0\})$ such that $g_k\rightharpoonup\mu$ and $f_k\rightharpoonup\nu$ in measure, where $\mu, \nu \ge 0$ and singular at the origin, so we may write $\mu=A(x)\dd x+a_0\delta_0$, $\nu=B(x)\dd x+b_0\delta_0$ for some $a_0,b_0\in\R^+$ and $A,B\geq0$ in $L^1(B_1(0))$.
	
	We first show that
    \begin{equation}\label{mu8pi}
        \mu\geq\frac{8\pi}p\delta_0\qquad\mbox{and}\qquad\nu\geq\frac{8\pi}q\delta_0\,.
    \end{equation}
	Since $g_k\to\mu$ in measure in $B_1(0)$, for all $\varphi\in C^\infty_0(B_1(0))$ we have 
	
	\begin{equation}\label{phigk}
		\int_{B_1(0)}\varphi\dd\mu=\lim_{k\to+\infty}\int_{B_1(0)}\varphi g_k(x)\dd x=\lim_{k\to+\infty}\int_{B_1(0)}\varphi g(u_k(x))\dd x\,.
	\end{equation}
	
	Let now $t\in(0,1)$ and choose $\varepsilon>0$ so that $t(1+\varepsilon)<1$. Let $\varphi\in C^\infty_0(B_1(0))$ so that $\varphi\equiv1$ on $B_t(0)$, $\supp\,\varphi\subset B_{t(1+\varepsilon)}(0)$ and with values in $[0,1]$.
	Then
	\begin{equation}\label{estimateA}
		\begin{split}
			\int_{B_1(0)}\varphi\dd\mu&=\int_{B_{t(1+\varepsilon)}(0)}\varphi\dd\mu = \int_{B_{t(1+\varepsilon)}(0)\setminus B_t(0)}\varphi A(x)\dd x+\int_{B_t(0)}\dd\mu\,\\
                &\leq\int_{B_{t(1+\varepsilon)}(0)\setminus B_t(0)} A(x)\dd x+\int_{B_t(0)}\dd\mu\,.
		\end{split}
	\end{equation}
	By \eqref{conc8pi} for a fixed $\delta>0$, there exists $R_0>0$ such that for all $R\geq R_0$,
    $$\frac{8\pi}p-\delta\leq\liminf_{k\to+\infty}\int_{B_{R\lambda_k}(0)}g(u_k)\dd x\,.$$
	Fixing $R=R_0$, and since $\lambda_k\to 0$, for $k$ large enough $R_0\lambda_k\leq t$, so $B_{R_0\lambda_k}(0)\subset B_t(0)$.	Hence by \eqref{phigk}
	\begin{equation*}
		\begin{split}
			\int_{B_{t(1+\varepsilon)}(0)}\varphi d\mu &=\lim_{k\to+\infty} \int_{B_{t(1+\varepsilon)}(0)}\varphi g(u_k)\dd x \geq\liminf_{k\to+\infty} \int_{B_t(0)} \varphi g(u_k)\dd x\\
			&=\liminf_{k\to+\infty}\int_{B_t(0)}g(u_k)\dd x\geq\liminf_{k\to+\infty} \int_{B_{R_0\lambda_k}(0)}g(u_k(x))\dd x\geq\frac{8\pi}p-\delta\,.
		\end{split}
	\end{equation*}
	By \eqref{estimateA} we therefore obtain
	$$\int_{B_{t(1+\varepsilon)}(0)\setminus B_t(0)}A(x)\dd x+\int_{B_t(0)}\dd\mu\geq\frac{8\pi}p-\delta\,.$$
	Letting first $\delta\to0$ and then $\varepsilon\to0$, this yields
	$$\int_{B_t(0)}\dd\mu\geq\frac{8\pi}p\,,$$
	which in turn, by the arbitrariness of $t\in(0,1)$, implies $\mu\geq\frac{8\pi}p\delta_0$. We can argue similarly with $\nu$ and conclude that \eqref{mu8pi} holds. Let now $\Phi,\Psi$ be the distributional solutions of
	\begin{equation*}
		\begin{cases}
			-\Delta\Phi=\mu&\mbox{in }B_1(0)\,,\\
			\Phi=0\quad&\mbox{on }\dB_1(0)\,,
		\end{cases}\quad\ \mbox{and}\quad\ \begin{cases}	
			-\Delta\Psi=\nu&\mbox{in }B_1(0)\,,\\
			\Psi=0\quad&\mbox{on }\dB_1(0)\,.
		\end{cases}
	\end{equation*}
	Since 
	\begin{equation*}
		\begin{cases}
			-\Delta u_k=f(v_k)=f_k\rightharpoonup\nu&\mbox{in }B_1(0)\,,\\
			u_k=0\quad&\mbox{on }\dB_1(0)\,,
		\end{cases}\quad\ \mbox{and}\quad\ \begin{cases}	
			-\Delta v_k=g(u_k)=g_k\rightharpoonup\mu&\mbox{in }B_1(0)\,,\\
			v_k=0\quad&\mbox{on }\dB_1(0)\,,
		\end{cases}
	\end{equation*}
	then $u_k\to\Psi$ and $v_k\to\Phi$ in $W^{1,\sigma}(B_1(0))$ for $\sigma\in[1,2)$ (see \cite[Proposition 5.1]{Ponce}) and, by \eqref{mu8pi} and the maximum principle,
	\begin{equation}\label{PhiPsi}
		\Phi\geq\frac{8\pi}pG_B(\cdot,0)=\frac4p\log\frac1{|\cdot|}\qquad\mbox{and}\qquad\Psi\geq\frac4q\log\frac1{|\cdot|}\,,
	\end{equation}
	where $G_B$ stands for the Green's function of $-\Delta$ with Dirichlet boundary conditions in the ball $B_1(0)$. By the symmetric role of $f$ and $g$ in \eqref{sys}, we may assume $p\leq q$, so we argue with $\Phi$; if instead $p>q$, one only needs to repeat the following argument using $\Psi$. By \cite[Lemma 1.1]{MR} and \eqref{BM}, we know that for all $\varepsilon>0$, there exist constants $C_\varepsilon, D_\varepsilon$ such that
\begin{equation}\label{MRLem1.1}
	D_\varepsilon\e^{p(1-\varepsilon)t}-C_\varepsilon\leq f(t) \leq D_\varepsilon\e^{p(1+\varepsilon)t}+C_\varepsilon\,.
\end{equation}
On the one hand, by \eqref{PhiPsi} we get
\begin{equation}\label{one_hand}
	\int_{B_1(0)}\e^{p(1-\varepsilon)\Phi}\dd x\geq \int_{B_1(0)}\e^{4(1-\varepsilon)\log\frac1{|x|}}\dd x=2\pi\int_0^1\frac{\dd\rho}{\rho^{4(1-\varepsilon)-1}}=+\infty\,,
\end{equation}
provided $\varepsilon > 0$ is small enough. On the other hand, by Fatou's lemma and \eqref{MRLem1.1}, one has
\begin{equation*}
	\begin{split}
		\int_{B_1(0)}\e^{p(1-\varepsilon)\Phi}\dd x&= \int_{B_1(0)}\lim_{k\to+\infty}\e^{p(1-\varepsilon)v_k}\dd x\leq \liminf_{k\to+\infty}\int_{B_1(0)}\e^{p(1-\varepsilon)v_k}\dd x\\
        &\leq\frac1{D_\varepsilon}\liminf_{k\to+\infty}\int_{B_1(0)}f(v_k)\dd x +\frac{C_\varepsilon}{D_\varepsilon}|B_1(0)|\leq\frac{\Lambda + |B_1(0)|C_\varepsilon}{D_\varepsilon}<+\infty\,,.
	\end{split}
\end{equation*}
which clearly contradicts \eqref{one_hand}, and eventually concludes the argument.

	\subsection{Case \texorpdfstring{$\boldsymbol{\tv_k(0)\to+\infty}$}{3}}\label{+infty}
	This time the easiest setting is the Liouville. Indeed, supposing \eqref{BM}, then
	\begin{equation*}
		\eta_k(x)=\underbrace{T_k}_{\to+\infty}+\underbrace{\theta_k(x)}_{\in[0,1]}\underbrace{\frac{g'}{g}(T_k)}_{\to q}\underbrace{\tv_k(x)}_{\to+\infty}\to+\infty\quad\ \mbox{u.c.s.}\,,
	\end{equation*}
	so $\slfrac{f'}f(\eta_k(x))\to p$ u.c.s.. This, together with $\slfrac{g'}g(T_k)\to q$, similarly to \eqref{utildacon} implies
	\begin{equation*}
		\frac\Lambda p\leftarrow\Lambda\frac f{f'}(S_k)\geq\int_{B_R(0)}\e^{\tslfrac{f'}f(\eta_k(x))\tslfrac{g'}g(T_k)\,\tv_k(x)}\dd x\to+\infty\,,
	\end{equation*}
	for any $R>0$ chosen arbitrarily, a contradiction. Suppose then from now on that \eqref{nonBM} holds. Of course, also in this case we can rely on the inequality
	\begin{equation}\label{ineq_key}
		\int_{B_R(0)}\e^{\tslfrac{f'}f(\eta_k(x))\tslfrac{g'}g(T_k)\,\tv_k(x)}\dd x\leq\Lambda\frac f{f'}(S_k)\,.
	\end{equation}
	Here, our analysis branches in two subcases.

	\subsubsection{Subcase 1\texorpdfstring{: $\boldsymbol{0<\frac{\tv_k(0)}{T_kB_k}\leq C}$ for some $\boldsymbol{C>0}$}{}}
	
	In this case clearly $\eta_k(x)\geq T_k$ holds u.c.s. by \eqref{eta_k}. Aiming at finding a contradiction by means of \eqref{ineq_key}, we need to bound from below the argument of the exponential.
    Using $(H_5)$, \eqref{bvk}-\eqref{bvk_to_bv}, the definition of $\eta_k$, and since $\tv_k(x)\leq\tv_k(0)$ because it is decreasing along the radius, we estimate
	\begin{equation}\label{pk_qk}
        \begin{split}
            \frac{f'}f(\eta_k(x))\frac{g'}g(T_k)\,\tv_k(x)&\geq\frac{f'}f(T_k)\frac{g'}g(T_k)\frac{T_k}{\eta_k(x)}\tv_k(x)\\
            &\geq\frac{f'}f(T_k)\frac{g'}g(T_k)\frac{\tv_k(x)}{1+\frac{\tv_k(x)}{T_kB_k}}\geq\frac{f'}f(T_k)\frac{g'}g(T_k)\frac{\tv_k(x)}{1+\frac{\tv_k(0)}{T_kB_k}}\\
			&=q_k\tv_k(x)=q_k\tv_k(0)+q_k\bv_k(x)\,,
        \end{split}
	\end{equation}
	where $q_k$ is defined as
	\begin{equation}\label{qk}
		q_k:=\frac{f'}f(T_k)\frac{g'}g(T_k)\frac1{1+\frac{\tv_k(0)}{T_kB_k}}\,.
	\end{equation}
	Hence, from \eqref{ineq_key}, since $\bv_k\leq0$ and $0\leq q_k\leq b(1+o_k(1))$,
	\begin{equation}\label{4.22bis}
		\Lambda\frac f{f'}(S_k)\geq\e^{q_k\tv_k(0)}\int_{B_R(0)}\e^{q_k\bv_k(x)}\dd x\geq C_R\e^{q_k\tv_k(0)},
	\end{equation}
	where
	\begin{equation}\label{C_R}
		C_R:=|B_R(0)|\,\e^{b\left(\min_{B_R(0)}\bv-1\right)}.
	\end{equation}
	This yields an estimate from above for the growth of $\tv_k(0)$:
	\begin{equation}\label{estimate_1b}
		q_k\tv_k(0)\leq\log\left(\frac\Lambda{C_R}\right)+\log\left(\frac f{f'}(S_k)\right).
	\end{equation}
    In order to reach a contradiction, we need to obtain an estimate from below for $\tv_k(0)$, and this is accomplished by means of the other energy estimate \eqref{utildacon}, that is
    \begin{equation}\label{utildacon_conseq}
		\int_{B_R(0)}\e^{\tslfrac{f'}f(S_k)\tslfrac{g'}g(\zeta_k(x))\tu_k(x)}\dd x\leq\Lambda\frac g{g'}(T_k)\,,
	\end{equation} 
    which is in terms of $\tu_k$, so we need to bound it from below. Exploiting the radial symmetry, and denoting by $r$ the radial coordinate, since $\tu_k(0)=0$ we have
	\begin{equation}\label{stima_TdD}
		\begin{split}
			2\pi|\tu_k(r)|&=2\pi\left(\tu_k(0)-\tu_k(r)\right)=-2\pi\int_0^r\tu_k'(s)\dd s=-\int_0^r\frac{|\partial B_s(0)|}s\,\partial_\nu{\tu_k}{|_{\partial B_s(0)}}\dd s\\
			&=-\int_0^r\frac1s\left(\int_{\partial B_s(0)}\partial_\nu\tu_k\right)\dd s=\int_0^r\frac1s\left(\int_{B_s(0)}-\Delta\tu_k(x)\dd x\right)\dd s\\
			&=\int_0^r\frac1s\left(\int_{B_s(0)}\e^{\tslfrac{f'}f(\eta_k(x))\tslfrac{g'}g(T_k)\,\tv_k(x)}\dd x\right)\dd s\,.
		\end{split}
	\end{equation}
	Since $\eta_k(x)\geq T_k$ u.c.s., it is easy to see by $(H_4)$ that
	\begin{equation*}
		\frac{f'}f(\eta_k(x))\frac{g'}g(T_k)\tv_k(x)\leq\frac{f'}f(T_k)\frac{g'}g(T_k)\tv_k(x)\,.
	\end{equation*}
	Defining
	\begin{equation}\label{gamma_k}
        \gamma_k:=\frac{f'}f(T_k)\frac{g'}g(T_k)\geq0\,,
	\end{equation}
	since $\bv_k(x)\leq0$, from \eqref{stima_TdD} and \eqref{tvk_ucs}, we infer
	\begin{equation*}
		\begin{split}
		    2\pi|\tu_k(r)|&\leq\int_0^r\frac1s\left(\int_{B_s(0)}\e^{\gamma_k\tv_k(x)}\dd x\right)\dd s=\e^{\gamma_k\tv_k(0)}\int_0^r\frac1s\left(\int_{B_s(0)}\e^{\gamma_k\bv_k(x)}\dd x\right)\dd s\\
            &\leq\e^{\gamma_k\tv_k(0)}\int_0^r\frac1s|B_s(0)|\dd s=\frac{\pi r^2}2\,\e^{\gamma_k\tv_k(0)}\,.
		\end{split}
	\end{equation*}
	By \eqref{4.22bis} this yields
	\begin{equation}\label{estimate_2a}
		-\tu_k(r)=|\tu_k(r)|\leq\e^{\gamma_k\tv_k(0)}\frac{r^2}4\leq\left(\frac\Lambda{C_R}\frac f{f'}(S_k)\right)^{\frac{\gamma_k}{q_k}}\frac{r^2}4\,.
	\end{equation}
    
	Note that, by \eqref{zeta_k} one has $\zeta_k(x)\leq S_k$ because $\tu_k\leq0$. Hence, it would be nice to apply $(H_4)$ and get $\frac{g'}g(\zeta_k(x))\leq\frac{g'}g(S_k)$, so that, combining this with the estimate from below \eqref{estimate_2a} (remember that $\tu_k\leq0$), we would manage to bound from below the left-hand side of \eqref{utildacon_conseq}. However, $(H_4)$ holds only for large $s$, and so one is first compelled to show that
    \begin{equation}\label{zk_infty}
        \zeta_k(x)\to+\infty\quad\mbox{u.c.s.}
    \end{equation}
    as $k\to+\infty$. We claim in fact that for any compact set $K\subset\R^2$ there exists $C_1=C_1(K)>0$ such that 
    \begin{equation*}
        \frac{\zeta_k(x)}{S_k}\geq C_1\quad\mbox{for all}\ \,x\in K\ \mbox{and }k\mbox{ large}\,.
    \end{equation*}
    Let us proceed by contradiction, and assume that, fixing an arbitrary $R>0$, one finds $(\xi_k)_k\subset B_R(0)$ such that $\frac{\zeta_k(\xi_k)}{S_k}\to0$ as $k\to+\infty$. By \eqref{zeta_k}, this means that
    \begin{equation*}
        (1-\thetabar_k(\xi_k))+\thetabar_k(\xi_k)\frac{u_k(\lambda_k\xi_k)}{S_k}\to0\,.
    \end{equation*}
    Since both quantities are positive, necessarily $\thetabar_k(\xi_k)\to1$. Then, again by \eqref{zeta_k}, one infers
    $$|\tu_k(\xi_k)|=-\tu_k(\xi_k)=(1+o_k(1))S_k\frac f{f'}(S_k)\,.$$
    This, together with \eqref{estimate_2a} for $r=|\xi_k|\leq R$, and \eqref{qk} and \eqref{gamma_k}, yields
    \begin{equation*}
       S_k\frac f{f'}(S_k)\leq\frac{R^2}4\left(\frac\Lambda{C_R}\frac f{f'}(S_k)\right)^{1+\frac{\tv_k(0)}{T_kB_k}}\leq\overline C_R\left(\frac f{f'}(S_k)\right)^{1+\frac{\tv_k(0)}{T_kB_k}}
    \end{equation*}
    for some $\overline C_R>0$, since we are in Subcase 1. Hence, passing to the logarithms, and noticing that $B_k=\slfrac g{g'}(T_k)=b^{-1}(1+o_k(1))\slfrac{f'}f(T_k)$ by $(H_b)$ for large $k$, one infers
    \begin{equation*}
        \tv_k(0)\geq\frac{1+o_k(1)}b\,T_k\frac{f'}f(T_k)\left(\log\left(\frac f{f'}(S_k)\right)\right)^{-1}\!\!\left(\log S_k-\log\overline C_R\right).
    \end{equation*}
    Combining this with \eqref{estimate_1b} and $(H_b)$, yields
    \begin{equation*}
        \begin{split}
            2\log\left(\frac f{f'}(S_k)\right)&\geq\log\left(\frac\Lambda{C_R}\right)+\log\left(\frac f{f'}(S_k)\right)\geq q_k\tv_k(0)\\
            &\geq\frac{b(1+o_k(1))}{1+\frac{\tv_k(0)}{T_kB_k}}\,\tv_k(0)\geq\frac b{2(1+C)}\,\tv_k(0)\\
            &\geq\frac1{4(1+C)}\,T_k\frac{f'}f(T_k)\left(\log\left(\frac f{f'}(S_k)\right)\right)^{-1}\log S_k\,,
        \end{split}
    \end{equation*}
    that is,
    \begin{equation*}
        8(1+C)\geq\frac{T_k\slfrac{f'}f(T_k)}{\left(\log\slfrac f{f'}(S_k)\right)^2}\,\log S_k\,.
    \end{equation*}
    This is however in contradiction with Lemma \ref{log_contradiction}, from which the ratio on the right-hand side is bounded away from zero. Thus, \eqref{zk_infty} is proved, and we can proceed with the argument which exploits \eqref{utildacon_conseq}. Indeed, by $(H_4)$
    now we obtain $\frac{g'}g(\zeta_k(x))\leq\frac{g'}g(S_k)$, and in turn $\frac{g'}g(\zeta_k(x))\tu_k(x)\geq\frac{g'}g(S_k)\tu_k(x)$. Hence, from \eqref{utildacon_conseq} using \eqref{estimate_2a} one gets
	\begin{equation*}
		\begin{split}
			\Lambda\frac g{g'}(T_k)&\geq\int_{B_R(0)}\e^{\frac{f'}f(S_k)\frac{g'}g(S_k)\tu_k(x)}\dd x\geq\int_{B_R(0)}\e^{-\frac{f'}f(S_k)\frac{g'}g(S_k)\,\e^{\gamma_k\tv_k(0)}\frac{|x|^2}4}\dd x\\
			&=:\int_{B_R(0)}\e^{-E_k|x|^2}\dd x=\frac\pi{E_k}\left(1-\e^{-R^2E_k}\right)=\frac\pi{E_k}\left(1+o_k(1)\right)\geq\frac\pi{2E_k}\,,
		\end{split}
	\end{equation*}
	because
	$$E_k:=\frac14\frac{f'}f(S_k)\frac{g'}g(S_k)\,\e^{\tslfrac{f'}f(T_k)\tslfrac{g'}g(T_k)\tv_k(0)}\to+\infty$$
	by $(H_b)$ and $\tv_k(0)\to+\infty$. Therefore, by \eqref{4.22bis}
	\begin{equation*}
		\begin{split}
			\frac{2\Lambda}\pi&\geq\frac{g'}g(T_k)E_k^{-1}=\frac{g'}g(T_k)\,\frac4{\slfrac{f'}f(S_k)\slfrac{g'}g(S_k)}\,\e^{-\frac{\gamma_k}{q_k}\cdot q_k\tv_k(0)}\\
			&\geq\frac2b\,\frac{g'}g(T_k)\left(\frac\Lambda{C_R}\frac f{f'}(S_k)\right)^{-\frac{\gamma_k}{q_k}}=\frac2b\left(\frac\Lambda{C_R}\right)^{-\frac{\gamma_k}{q_k}}\left(\frac{f'}f(S_k)\right)^{\frac{\gamma_k}{q_k}}\frac{g'}g(T_k)\,,
		\end{split}
	\end{equation*}
	which can be rewritten as
	\begin{equation}\label{*}
		\frac{b\Lambda}\pi\left(\frac\Lambda{C_R}\right)^{\frac{\gamma_k}{q_k}}\geq\left(\frac{f'}f(S_k)\right)^{\frac{\gamma_k}{q_k}-1}\frac{f'}f(S_k)\frac{g'}g(T_k)\geq b(1+o_k(1))\left(\frac{f'}f(S_k)\right)^{\frac{\gamma_k}{q_k}-1}
	\end{equation}
	by $(H_4)$ and Lemma \ref{t>s_Lemma}. Notice that the exponent
	\begin{equation*}
		\frac{\gamma_k}{q_k}-1=\frac{\tv_k(0)}{T_kB_k}>0
	\end{equation*}
	by \eqref{qk} and \eqref{gamma_k}, and that $\slfrac{f'}f(S_k)\to0$, therefore the behaviour of the right-hand side of \eqref{*} is different according to the following two alternatives:
	\begin{equation}\label{I_II}
		\frac{\tv_k(0)}{T_kB_k}\log\left(\frac{f'}f(S_k)\right)\to \begin{cases}
			c\leq0&\quad\,\mbox{(I)}\,,\\
			-\infty&\quad\,\mbox{(II)}\,,
		\end{cases}
	\end{equation}
	possibly up to a subsequence.
	
	If (\ref{I_II}-I) holds, then
    \begin{equation}\label{consequence_I}
		\left(\frac{f'}f(S_k)\right)^{\frac{\gamma_k}{q_k}-1}=\e^{\left(\frac{\gamma_k}{q_k}-1\right)\log\left(\frac{f'}f(S_k)\right)}=\e^{\frac{\tv_k(0)}{T_kB_k}\log\left(\frac{f'}f(S_k)\right)}\to C
	\end{equation}
	as $k\to+\infty$. Hence, from \eqref{*} and $(H_b)$, $(H_4)$ and Lemma \ref{t>s_Lemma},
	\begin{equation}\label{**}
		\frac{b\Lambda}\pi\left(\frac\Lambda{C_R}\right)^{\frac{\gamma_k}{q_k}}\geq\left(\frac{f'}f(S_k)\right)^{\frac{\gamma_k}{q_k}-1}\frac{f'}f(S_k)\frac{g'}g(S_k)\to Cb>0\,,
	\end{equation}
	so in order to reach a contradiction, we need that the constant $C_R$, defined in \eqref{C_R} and appearing on the left-hand side, diverges to $+\infty$ as $R\to+\infty$. Note, indeed, that so far the radius $R$ was arbitrary. Since $C_R$ depends on $\bar v$, which satisfies \eqref{eq_bvk}, we need to investigate first the global bounds of $\tu_k$. By \eqref{pk_qk} and \eqref{stima_TdD}, for $x\in\Omega_k$ and $r=|x|$,
	\begin{equation}\label{better_from_below}
		\begin{split}
			-2\pi\tu_k(x)&=\int_0^r\frac1s\left(\int_{B_s(0)}\e^{\tslfrac{f'}f(\eta_k(x))\tslfrac{g'}g(T_k)\tv_k(x)}\dd x\right)\dd s\\
			&\geq\e^{q_k\tv_k(0)}\int_0^r\frac1s\left(\int_{B_s(0)}\e^{q_k\bv_k(x)}\dd x\right)\dd s\,.
		\end{split}
	\end{equation}
	Let us fix $R_0>0$. If $s\geq R_0$, then
	\begin{equation}\label{I_ineq}
		\int_{B_s(0)}\e^{q_k\bv_k(x)}\dd x\geq\int_{B_{R_0}(0)}\e^{q_k\bv_k(x)}\dd x\geq\int_{B_{R_0}(0)}\e^{b\bv_k(x)}\dd x\to\int_{B_{R_0}(0)}\e^{b\bv(x)}\dd x=:M_0>0
	\end{equation}
	by \eqref{bvk_to_bv}, by recalling that $q_k\in(0,b)$ and $\bv_k\leq0$. On the other hand, if $s<R_0$ then
	\begin{equation}\label{II_ineq}
		\begin{split}
			\int_{B_s(0)}\e^{q_k\bv_k(x)}\dd x&\geq\int_{B_s(0)}\left(\e^{b\bv_k(x)}-\e^{b\bv(x)}\right)\dd x+\int_{B_s(0)}\e^{b\bv(x)}\dd x\\
			&\geq-\int_{B_s(0)}\max_{x\in B_{R_0}(0)}\left|\e^{b\bv_k(x)}-\e^{b\bv(x)}\right|\dd x+\int_{B_s(0)}\e^{b\bv(x)}\dd x\\
			&\geq\left(-M_{k,R_0}+\e^{b\min_{B_{R_0}(0)}\bv}\right)\pi s^2\geq\varepsilon_0\pi s^2
		\end{split}
	\end{equation}
	for some $\varepsilon_0>0$, since $M_{k,R_0}:=\max_{x\in B_{R_0}(0)}\left|\e^{b\bv_k(x)}-\e^{b\bv(x)}\right|\to0$ as $k\to+\infty$ since $R_0$ is fixed. Therefore we may combine \eqref{I_ineq} and \eqref{II_ineq} by writing
	$$\int_{B_s(0)}\e^{q_k\tv_k(x)}\dd x\geq\varepsilon_1\,\frac{\pi s^2}{1+s^2}\,,$$
	for $\varepsilon_1\leq\min\left\{\varepsilon_0,\,\frac{M_0}\pi\right\}$ which holds for all $s>0$. Thus, \eqref{better_from_below} becomes
	\begin{equation*}
		-2\pi\tu_k(x)\geq\varepsilon_1\e^{q_k\tv_k(0)}\int_0^r\frac{\pi s}{1+s^2}\dd s=\e^{q_k\tv_k(0)}\frac{\varepsilon_1\pi}2\log(1+r^2)\,,
	\end{equation*}
	which yields
	\begin{equation}\label{better_from_below_2}
		\tu_k(x)\leq-\frac{\varepsilon_1}4\log(1+|x|^2)\,\e^{q_k\tv_k(0)}\,.
	\end{equation}
	for all $x\in\Omega_k$. Since $(H_b)$ and the boundedness of $\frac{\tv_k(0)}{T_kB_k}$ imply that $q_k\not\to0$, then $q_k\tv_k(0)\to+\infty$, and in turn
	\begin{equation*}
		\tu_k(x)\to-\infty\quad\mbox{as}\ \,k\to+\infty
	\end{equation*}
	for all $x\neq0$. We transfer this information to obtain properties of $\bv$ by \eqref{eq_bvk}. Arguing as in \eqref{stima_TdD}, we have
    \begin{equation}\label{stima_vkt}
		\begin{split}
			2\pi|\bv_k(r)|&=2\pi\left(\bv_k(0)-\bv_k(r)\right)=\int_0^r\frac1s\left(\int_{B_s(0)}-\Delta\bv_k(x)\dd x\right)\dd s\\
			&=\int_0^r\frac1s\left(\int_{B_s(0)}\e^{\tslfrac{f'}f(S_k)\tslfrac{g'}g(\zeta_k(x))\,\tu_k(x)}\dd x\right)\dd s\,,
		\end{split}
	\end{equation}
    so we need to analyse the exponent on the right. Recalling $S_k\geq\zeta_k(x)$ and $\tu_k\leq0$, by $(H_5)$ and \eqref{zk_infty} one infers
	\begin{equation}\label{ineqq}
		\begin{split}
			\frac{f'}f(S_k)\frac{g'}g(\zeta_k(x))\tu_k(x)&\leq\frac{f'}f(S_k)\frac{g'}g(S_k)\frac{\zeta_k(x)}{S_k}\tu_k(x)
		\end{split}
	\end{equation}
	and by \eqref{zeta_k}, \eqref{B_kA_k}, \eqref{estimate_2a}, and \eqref{4.22bis}, one estimates
	\begin{equation}\label{ineqq_2}
		\frac{\zeta_k(x)}{S_k}=1+\overline\theta_k(x)\frac{\tu_k(x)}{A_kS_k}\geq1-\overline\theta_k(x)\frac{\e^{\gamma_k\tv_k(0)}}{A_kS_k}\,\frac{r^2}4\geq1-\left(\frac\Lambda{C_R}\right)^\frac{\gamma_k}{q_k}\frac{r^2}4\left(\frac f{f'}(S_k)\right)^{\frac{\gamma_k}{q_k}-1}\frac1{S_k}\to1
	\end{equation}
	u.c.s. as $k\to+\infty$ by \eqref{consequence_I}. Therefore, by \eqref{ineqq}, \eqref{ineqq_2}, and \eqref{better_from_below_2},
	\begin{equation}\label{ineqq_3}
		\begin{split}
			\frac{f'}f(S_k)\frac{g'}g(\zeta_k(x))\tu_k(x)&\leq\frac{f'}f(S_k)\frac{g'}g(S_k)\frac{\zeta_k(x)}{S_k}\left(-\frac{\varepsilon_1}4\log(1+r^2)\e^{q_k\tv_k(0)}\right)\\
			&\leq-\left(b+o_k(1)\right)\frac{\varepsilon_1}4\log(1+|x|^2)\e^{q_k\tv_k(0)}\to-\infty
		\end{split}
	\end{equation}
    for all $x\neq0$. By \eqref{ineqq_3} and dominated convergence theorem ($\tu_k\le 0$, hence $\e^{\tslfrac{f'}f(S_k)\tslfrac{g'}g(\zeta_k(x))\,\tu_k(x)}\le 1$), we get that, for any $s\in[0,r]$, 
    $$\lim_{k\to +\infty } \frac1s\int_{B_s(0)}\e^{\tslfrac{f'}f(S_k)\tslfrac{g'}g(\zeta_k(x))\,\tu_k(x)}\dd x =0\,.$$
    Since
    $$\frac1s\int_{B_s(0)}\e^{\tslfrac{f'}f(S_k)\tslfrac{g'}g(\zeta_k(x))\,\tu_k(x)}\dd x \le \pi s\,,$$
    we may apply dominated convergence again and get directly from \eqref{stima_vkt} that $\bv_k(r)\to 0$ as $k\to +\infty$. In particular $\bv = 0$ in $\R^2$ by \eqref{bvk_to_bv}. As a consequence the constant $C_R$ defined in \eqref{C_R} behaves like $R^2$. Therefore, we may take the limit as $R\to+\infty$ in \eqref{**} and finally reach the desired contradiction. This concludes our analysis of Subcase 1 under the condition (\ref{I_II}-I).

    \vskip0.2truecm
	Let us now consider Subcase 1 under the complementary condition (\ref{I_II}-II), namely
	\begin{equation*}
		\frac{\tv_k(0)}{T_kB_k}\log\left(\frac f{f'}(S_k)\right)\geq M
	\end{equation*}
	for any arbitrary $M>0$, for all $k$ large. Here the strategy resembles the one used to prove \eqref{zk_infty}. Indeed, starting again from \eqref{ineq_key}, by $(H_5)$, \eqref{eta_k}, and \eqref{bvk_to_bv}, one gets
	\begin{equation*}
		\Lambda\frac f{f'}(S_k)\geq\int_{B_R(0)}\e^{\tslfrac{f'}f(T_k)\tslfrac{g'}g(T_k)\frac{T_k}{\eta_k(x)}\,\tv_k(x)}\dd x\geq C_R\,\e^{\frac b{1+C}\tv_k(0)}\geq C_R\,\e^{\frac{bM}{1+C}\,T_kB_k\left(\log\left(\tslfrac f{f'}(S_k)\right)\right)^{-1}}.
	\end{equation*}
	Hence, applying the logarithm on both sides, one obtains
	\begin{equation*}
		2\log\left(\frac f{f'}(S_k)\right)\geq\log\left(\frac\Lambda{C_R}\right)+\log\left(\frac f{f'}(S_k)\right)\geq\frac{bMT_kB_k}{(1+C)\log\slfrac f{f'}(S_k)}\,,
	\end{equation*}
	since $\slfrac{f'}f(S_k)\to0$, that is
	\begin{equation*}
		\left(\log\frac f{f'}(S_k)\right)^2\geq\frac{bM}{2(1+C)}\,T_kB_k=\frac{bM}{2(1+C)}\,T_k\frac g{g'}(T_k)\,,
	\end{equation*}
    from which, using $(H_b)$,
    \begin{equation}\label{log_contradiction_new}
        \frac{2(1+C)}M(1+o_k(1))\geq\frac{T_k\slfrac{f'}f(T_k)}{\left(\log\slfrac f{f'}(S_k)\right)^2}\,.
    \end{equation}
    Since $M$ is arbitrary, this contradicts Lemma \ref{log_contradiction} by means of assumptions $(H_6)$ or $(H_6')$, and concludes our analysis of Subcase 1 also under assumption (\ref{I_II}-II).

	\subsubsection{Subcase 2\texorpdfstring{: $\boldsymbol{\frac{\tv_k(0)}{T_kB_k}\to+\infty}$}{}}
		
		 As in Subcase 1, clearly $\eta_k(x)\geq T_k$ u.c.s., hence \eqref{pk_qk} holds, and so by \eqref{bvk}-\eqref{bvk_to_bv} we get
			\begin{equation*}
				\begin{split}
					\frac{f'}f(\eta_k(x))\frac{g'}g(T_k)\tv_k(x)&\geq\frac{f'}f(T_k)\frac{g'}g(T_k)\frac{\tv_k(0)}{1+\frac{\tv_k(0)}{T_kB_k}}-C_R\,.
				\end{split}
			\end{equation*}
            By the elementary estimate
			$$\frac1{1+y}=\frac1y\cdot\frac1{1+\frac1y}\geq\frac1y\left(1-\frac1y\right)$$
			and \eqref{B_kA_k}, one ends up with
			\begin{equation*}
				\begin{split}
					\frac{f'}f(\eta_k(x))\frac{g'}g(T_k)\tv_k(x)&\geq\frac{f'}f(T_k)\frac{g'}g(T_k)T_kB_k\left(1-\frac{T_kB_k}{\tv_k(0)}\right)-C_R\\
					&=\frac{f'}f(T_k)T_k\left(1-\frac{T_kB_k}{\tv_k(0)}\right)-C_R=\frac{f'}f(T_k)T_k(1+o_k(1))-C_R\,,
				\end{split}
			\end{equation*}
			recalling that we are supposing $\frac{\tv_k(0)}{T_kB_k}\to+\infty$. By \eqref{ineq_key} this yields
			\begin{equation}\label{ineq_key_delta}
				\Lambda\geq\frac{f'}f(S_k)\int_{B_R(0)}\e^{\tslfrac{f'}f(\eta_k(x))\tslfrac{g'}g(T_k)\tv_k(x)}\dd x\geq|B_R(0)|\e^{-C_R}\e^{(1-\varepsilon)\frac{f'}f(T_k)T_k}\frac{f'}f(S_k)\,.
		\end{equation}
		Now, if we suppose that $(H_6)$ holds, it is easy to find a contradiction. Indeed, choosing $\varepsilon=1-\frac1a\in(0,1)$, where $a>1$ is defined in $(H_6)$, by $(H_5)$ and Lemma \ref{t>s_Lemma} one gets
		\begin{equation*}
            \begin{split}
                \frac{\Lambda\e^{C_R}}{|B_R(0)|}&\geq\e^{(1-\varepsilon)\frac{f'}f(T_k)T_k}\frac{f'}f(S_k)\geq\e^{(1-\varepsilon)\frac{f'}f(S_k)S_k}\frac{f'}f(S_k)\\
                &\geq\e^{(\log S_k)^2}\frac{f'}f(S_k)\geq S_k\frac{f'}f(S_k)\to+\infty
            \end{split}
		\end{equation*}
		by Lemma \ref{f3_conseq}(ii). 
        
        On the other hand, under $(H_6')$, again taking $\varepsilon=1-\frac1a$,
        \begin{equation*}
           \frac{\Lambda\e^{C_R}}{|B_R(0)|}\geq\e^{(1-\varepsilon)\frac{f'}f(T_k)T_k}\frac{f'}f(S_k)\geq\e^{(\log\log F(T_k))^2}\frac{f'}f(S_k)\geq\log F(T_k)\cdot\frac{f'}f(S_k)\to+\infty
        \end{equation*}
        by Lemma \ref{t>s_Lemma}. These contradictions conclude both Subcase 2 and the whole proof of Theorem \ref{Thm_Uniform_bound}.

    \begin{remark}\label{Rmk_f7}
        In light of the analysis in Subcases 1 and 2, we believe that $(H_6)$-$(H_6')$ can be relaxed to the following, respectively:
        \begin{enumerate}
            \item[$(H_7)$] there exist $t_0>0$ and $a>1$, such that $t\frac{f'(t)}{f(t)}\geq a\log t$ for all $t>t_0\,$;
            \item[$(H_7')$] there exist $t_0>0$ and $a>1$, such that $t\frac{f'(t)}{f(t)}\geq a\log\log F(t)$ for all $t>t_0\,$.
        \end{enumerate}
        These would allow for a better bound from below on $f$. Indeed, with the same argument of Lemma \ref{lem_f8_conseq} one easily proves that $(H_7)$ implies $f(t)\ges\e^{\frac a2(\log t)^2}$ for $t$ large, while from $(H_7')$ one deduces $f(t)\ges\e^{\frac a2\log t\log\log t}$ for $t$ large.
        
        Note in fact that $(H_7)$-$(H_7')$ are the right assumptions one needs in order to guarantee that the contradictory argument of Subcase 2 closes. To conclude the one in Subcase 1, and to reach the contradiction from \eqref{log_contradiction_new_step1}, in assumptions $(H_6)$-$(H_6')$ a square was needed. However, we suspect that the argument in Subcase 1 is not optimal: e.g. in the step $\frac{g'}g(T_k)\geq\frac{g'}g(S_k)$ in \eqref{*} we loose some growth, which might be essential for an improvement of our argument.
    \end{remark}

		\section{Existence: Proof of Theorem \ref{Thm_Existence}}\label{Sec_Existence}
        
        We follow the approach of \cite{QS}, which is based on the \textit{Fixed Point} (or Leray-Schauder) Index theory, and we refer to \cite{AdF} for its definition and basic properties. In what follows in view of Theorem \ref{Thm_Existence} we consider $\Omega=B_1(0)$, however, once such a priori bound is recovered in a smooth bounded domain $\Omega\subset\R^2$, the proof of Theorem \ref{Thm_Uniform_bound} works without changes.
        \vskip0.2truecm
        Let $K$ be the cone of functions in $C(\Omegabar)\times C(\Omegabar)$ which are positive in $\Omega$ and denote by $S$ the solution operator of the system
        \begin{equation*}
    		\begin{cases}
    			-\Delta u=\phi\quad&\mbox{in }\Omega\,,\\
    			-\Delta v=\psi\quad&\mbox{in }\Omega\,,\\
    			u=v=0\quad&\mbox{on }\dOmega\,,
    		\end{cases}
    	\end{equation*}
		namely $S(\phi,\psi)=(u,v)$. Note that $S$ is a linear compact operator from $C(\Omegabar)\times C(\Omegabar)$ into itself. Hence solutions of \eqref{sys} correspond to fixed points of the map $T:(u,v)\mapsto T(u,v):=S(f(v),g(u))$, which is compact by $(H_1)$.
        For $W\subset K$ relatively open such that $Tz\neq z$ for all $z\in\partial W$ we denote by $i_K(T,W)$ the 
        fixed point index of $T$ in $W$ with respect to $K$. We are going to prove that there exist $0<r<R$ such that $i_K(T,W_R\setminus\overline{W_r})\neq0$, by showing that $i_K(T,W_r)=1$ while $i_K(T,W_R)=0$. Here $W_s:=B_s(0)\cap K$, where $B_s(0)$ for $s>0$ denotes the ball in $C(\Omegabar)\times C(\Omegabar)$ of radius $s$ and centred at $0$. To prove that there exists $r$ small such that the index in $W_r$ equals $1$, we make use of the superlinearity of $f$ and $g$ at $0$; instead, to prove the index vanishes in $W_R$ for a large $R$, we exploit the a priori bound by Theorem \ref{Thm_Uniform_bound} together with the superlinearity of the nonlinearities at $\infty$.
        \vskip0.2truecm
        Let us introduce the homothety $H_1(t,u,v)=tT(u,v)$ and prove that there exists $r>0$ such that $H_1(t,u,v)\neq(u,v)$ for all $t\in[0,1]$ if $(u,v)\in\partial W_r$. To this aim fix $\varepsilon\in(0,\lambda_1)$ and by $(H_9)$ take $r>0$ such that if $\|u\|_\infty+\|v\|_\infty=r$, one has $f(v)\leq\varepsilon v$ and $g(u)\leq\varepsilon u$. Introduce $\phi_1$ as the first (positive) eigenfunction of $-\Delta$ in $\Omega$. Supposing that there exists $(u,v)$ such that $H_1(t,u,v)=(u,v)$, namely a solution of
        \begin{equation*}
    		\begin{cases}
    			-\Delta u=tf(v)\quad&\mbox{in }\Omega\,,\\
    			-\Delta v=tg(u)\quad&\mbox{in }\Omega\,,\\
    			u=v=0\quad&\mbox{on }\dOmega\,,
    		\end{cases}
    	\end{equation*}
        we test the system with $\phi_1$ and sum the two equations:
        \begin{equation*}
            \lambda_1\intOmega(u+v)\phi_1=\intOmega(\nabla u+\nabla v)\nabla \phi_1=t\intOmega\left(f(v)+g(u)\right)\phi_1\leq\varepsilon t\intOmega(u+v)\phi_1\,,
        \end{equation*}
		which is a contradiction since $t\in[0,1]$ and $\varepsilon<\lambda_1$. As a consequence,
        \begin{equation}\label{Index_1}
            i_K(T,W_r)=i_K\left(H_1(1,\cdot,\cdot),W_r\right)=i_K\left(H_1(0,\cdot,\cdot),W_r\right)=i_K(0,W_r)=1\,.
        \end{equation}

        Define now a second homothety $H_2(\mu,u,v)=S( f(v+\mu),g(u+\mu))$ where $\mu\ge 0$, the fixed point of which are solutions  of
        \begin{equation}\label{sys_mu}
    		\begin{cases}
    			-\Delta u=f(v+\mu)\quad&\mbox{in }\Omega\,,\\
    			-\Delta v=g(u+\mu)\quad&\mbox{in }\Omega\,,\\
    			u=v=0\quad&\mbox{on }\dOmega\,.
    		\end{cases}
    	\end{equation}
        By $(H_3)$ we know that both $f$ and $g$ are superlinear (see Lemma \ref{TM-subcritiche}), that is, for any $M>0$ there exists $t_0>0$ such that for all $t>t_0$ one has $f(t)>Mt$ and $g(t)>Mt$. Again by testing the system with $\phi_1$, we obtain
        \begin{equation*}
            \begin{split}
                \intOmega f(v+\mu)\phi_1&=\intOmega\nabla u\nabla\phi_1=\lambda_1\left(\int_{\{u\leq t_0\}}u\phi_1+\int_{\{u>t_0\}}u\phi_1\right)\\
                &\leq\lambda_1\left(t_0\|\phi_1\|_1+\frac1M\intOmega g(u+\mu)\phi_1\right),
            \end{split}
        \end{equation*}
		since $\mu\geq0$ and $\phi_1>0$. Similarly
        \begin{equation*}
                \intOmega g(u+\mu)\phi_1=\intOmega\nabla v\nabla\phi_1\leq\lambda_1\left(t_0\|\phi_1\|_1+\frac1M\intOmega f(v+\mu)\phi_1\right).
        \end{equation*}
		These together yield
        \begin{equation*}
                \left(1-\frac{\lambda_1^2}{M^2}\right)\intOmega f(v+\mu)\phi_1\leq\lambda_1\left(1+\frac{\lambda_1}M\right)t_0\|\phi_1\|_1\,.
        \end{equation*}
        and
        \begin{equation*}
                \left(1-\frac{\lambda_1^2}{M^2}\right)\intOmega g(u+\mu)\phi_1\leq\lambda_1\left(1+\frac{\lambda_1}M\right)t_0\|\phi_1\|_1\,.
        \end{equation*}
		Choosing $M=2\lambda_1$, one then finds
        \begin{equation}\label{Bound_l1loc}
            \intOmega f(v+\mu)\phi_1\leq2\lambda_1t_0\|\phi_1\|_1\quad\ \mbox{and}\ \quad\intOmega g(u+\mu)\phi_1\leq2\lambda_1t_0\|\phi_1\|_1\,,
        \end{equation}
        which by $(H_1)$ and $u,v\geq0$ imply that $\mu$ needs to be bounded from above. Hence, there exists $\overline\mu>0$ such that for all $\mu>\overline\mu$ one has $H_2(\mu,u,v)\neq(u,v)$ in $K$.  Moreover, for any fixed $\mu_0$, we note that $f(\cdot+\mu)$ and $g(\cdot+\mu)$ satisfy the assumptions $(H_1)$-$(H_6')$ uniformly for $\mu\in[0,\mu_0]$. In fact, it is evident that $(H_1)$-$(H_4)$ hold; moreover, $\frac1t\frac f{f'}(t+\mu)=\left(1+\frac\mu t\right)\frac1{t+\mu}\frac f{f'}(t+\mu)$, which is decreasing for large $t$ and uniformly in $\mu\in[0,\mu_0]$. Indeed, this is a product of two eventually decreasing positive functions by $(H_5)$ applied to $f$; the same argument applies also for $g$, hence $(H_5)$ holds also for $f(\cdot+\mu)$ and $g(\cdot+\mu)$. Similarly, one also verifies $(H_6)$ and $(H_6')$: indeed, e.g.
        $$\frac t{(\log t)^2}\frac{f'}f(t+\mu)=\frac{t(\log(t+\mu))^2}{(t+\mu)(\log t)^2}\cdot\frac{(t+\mu)}{(\log(t+\mu))^2}\frac{f'}f(t+\mu)\geq\frac a2$$
        for $t$ large, where $a$ is the constant in $(H_6)$ for $\mu=0$, since the first term converges to $1$ as $t\to+\infty$ uniformly in $\mu\in[0,\mu_0]$. Moreover, it is important to note that the constant $\Lambda$ in Proposition \ref{Prop_Lambda} is independent of $\mu$, provided $\mu\in[0,\mu_0]$. Indeed, inspecting its proof in \cite[Theorem 1.2]{dFdOR}, it is sufficient that \eqref{Bound_l1loc} holds with a bound independent of $\mu\in[0,\mu_0]$ to conclude. After all these considerations, we can thus state that Theorem \ref{Thm_Uniform_bound} applies with a bound uniform with respect to $\mu\in[0,\mu_0]$. Therefore taking $\mu_0=\overline\mu+1>0$ there exists $\overline R=\overline R(\overline\mu)>0$ such that all solutions of \eqref{sys_mu} are bounded in $L^\infty$ norm by $\overline R/2$. Hence, $H_2(\mu,u,v)\neq(u,v)$ for all $(u,v)\in B_{\overline R}(0)^c\cap K$ and $\mu\in[0,\overline\mu+1]$. Hence, combining the two information on the map $H_2$, we infer
        \begin{equation}\label{Index_0}
            i_K(T,W_{\overline R})=i_K\left(H_2(0,\cdot,\cdot),W_{\overline R}\right)=i_K\left(H_2(\overline\mu+1,\cdot,\cdot),W_{\overline R}\right)=i_K(0,W_{\overline R})=0\,.
        \end{equation}
        As a result, from \eqref{Index_0} and \eqref{Index_1} one deduces $i_K(T,W_R\setminus\overline{W_r})=-1\neq0$, which yields the existence of a nontrivial positive solution of \eqref{sys}.
        
		\section{Open problems}
		\begin{enumerate}
			\item Is it possible to show that the growth assumption $(H_b)$ is in some sense sharp? We point out that in the scalar case the Liouville growth $t\mapsto\e^t$ is sharp for \textit{distributional solutions}, in case of a singular potential, see \cite{BM,MR}.
            \item Is it possible to adapt our blow-up method for \eqref{sys} in a generic smooth bounded domain $\Omega\subset\R^2$? One of the main obstruction is the fact that the two components might achieve their maxima blow-up points in different points in $\Omega$ (which, however, cannot approach the boundary by Proposition \ref{Prop_Lambda} from \cite{dFdOR}). Moreover, in order to obtain global estimates for $\tu_k$ and $\tv_k$, we relied on the radial symmetry, see e.g. \eqref{stima_TdD} or \eqref{stima_vkt}.
		\end{enumerate}

		\section*{Acknowledgments}

        An important part of this work was carried out during mutual visits of the authors -- at Universidad de Granada in January 2024 and at Universit\`a dell'Insubria, Varese, in January 2025, where L.B. and G.R. were employed, respectively. The authors express their gratitude to the hosting institutions for the hospitality that helped the development of this paper. 
        L.B., G.M., and G.R. are members of the {\em Gruppo Nazionale per l'Analisi Ma\-te\-ma\-ti\-ca, la Probabilit\`a e le loro Applicazioni} (GNAMPA) of the {\em Istituto Nazionale di Alta Matematica} (INdAM). This work was partially supported by the INdAM-GNAMPA Projects 2025 (CUP E5324001950001) titled {\em Regolarit\`a ed esistenza per operatori anisotropi} and \textit{Critical and limiting phenomena in nonlinear elliptic systems} and by the INdAM-GNAMPA Projects 2026 (CUP E53C25002010001) titled \textit{Structural degeneracy and criticality in (sub)elliptic PDEs} and \textit{Strutture Analitiche e Geometriche in PDEs: Regolarit\`a, Fenomeni Critici e Dinamiche Complesse}. L.B. is also partially supported by the ``Maria de Maeztu'' Excellence Unit IMAG, reference CEX2020-001105-M, funded by MCIN/AEI/10.13039/\-501100011033/ and by the Deutsche Forschungsgemeinschaft (DFG, German Research Foundation) - Project-ID 258734477 - SFB 1173.

\end{document}